\newcommand{\C}{{\mathbb C}}
\newcommand{\Z}{{\mathbb Z}}
\newcommand{\IG}{\mathrm{IG}}
\DeclareMathOperator{\Sp}{Sp}
\DeclareMathOperator{\GL}{GL}
\newcommand{\IF}{\mathrm{IF}}
\newcommand{\Xo}{X^\circ} 
\newcommand{\Tsmall}{T_{2n}}
\newcommand{\Wodd}{W^{odd}}
\newcommand{\Xev}{X^{ev}}
\newcommand{\Wlarge}{W}
\newcommand{\wtE}{\widetilde{E}}
\newcommand{\wto}{\widetilde{\omega}}
\DeclareMathOperator{\Span}{Span}
\DeclareMathOperator{\e}{\mathbf{e}}
\DeclareMathOperator{\diag}{diag}
\newcommand{\Mb}{\overline{\mathcal{M}}}
\DeclareMathOperator{\ev}{ev}
\DeclareFontFamily{OT1}{rsfs}{}
\DeclareFontShape{OT1}{rsfs}{n}{it}{<-> rsfs10}{}
\DeclareMathAlphabet{\mathscr}{OT1}{rsfs}{n}{it}
\newtheorem{thm}{Theorem}[section]
\newtheorem{lemma}[thm]{Lemma}
\newtheorem{prop}[thm]{Proposition}
\theoremstyle{definition} \newtheorem{defn}[thm]{Definition}}
\theoremstyle{remark} \newtheorem{remark}[thm]{Remark}
\newtheorem{example}[thm]{Example}}
\begin{document}

\title[]{Curve neighborhoods and Combinatorial Property $\mathcal{O}$ for a family of odd symplectic partial flag manifolds}



\author{Connor Bean}

\address{
Department of Mathematical Sciences,
Henson Science Hall, 
Salisbury University,
Salisbury, MD 21801
}
\email{cbean2@gulls.salisbury.edu}

\author{Bradley Cruikshank}

\address{
Department of Mathematical Sciences,
Henson Science Hall, 
Salisbury University,
Salisbury, MD 21801
}
\email{bcruikshank1@gulls.salisbury.edu}

\author{Ryan M. Shifler}

\address{
Department of Mathematical Sciences,
Henson Science Hall, 
Salisbury University,
Salisbury, MD 21801
}
\email{rmshifler@salisbury.edu}

\subjclass[2010]{Primary 14N35; Secondary 14N15, 14M15}

\begin{abstract}
Let $E$ be an odd dimensional complex vector space and $\IF:=\IF(1,2;E)$ be the family of odd symplectic partial flag manifold. In this paper we give a full description of the irreducible components of the degree $d$ curve neighborhood of any Schubert variety of $\IF$, study their lattice structure, and prove a combinatorial version of Conjecture $\mathcal{O}.$
\end{abstract}

\maketitle


%
%

\section{Introduction}
\subsection{Overview}
The degree $d$ curve neighborhood of a subvariety $V \subset X$, denoted $\Gamma_d(V)$, is the closure of the union of all degree $d$ rational curves through $V$. Curve neighborhoods were introduced in \cite{BCMP:qkfin} to prove finiteness of quantum $K$-theory for $X$ a cominuscule homogeneous space. Let $E$ be an odd dimensional complex vector space and $\IF:=\IF(1,2;E)$ be the family of odd symplectic partial flag manifold made up of sequences of vector spaces $(V_1 \subset V_2 \subset E)$ where $\dim V_i=i$ and $V_i$ is isotropic with respect to a (necessarily degenerate) symmetric form. In this paper we give a full description of the irreducible components of the degree $d$ curve neighborhood of any Schubert variety of $\IF$, study their lattice structure, and prove a combinatorial version of Conjecture $\mathcal{O}.$

\subsection{Broader context} Curve neighborhoods in homogeneous $G/P$ case are irreducible and there are combinatorial models to perform calculations (see \cite{BCMP:qkfin,buch.m:nbhds,ShiflerWithrow, Maya}). In \cite{Songul:thesis}, Aslan shows that the irreducible components of curve neighborhoods in the Affine Flag in Type A have equal dimension. It is shown in \cite{PechShif:CNBDS} that curve neighborhoods in the odd symplectic Grassmannian may not be irreducible. In Section \ref{sec:combinatorial}, we see, for the first time, that reducible curves are an integral part of understanding the geometry of the (combinatorial) quantum Bruhat graph for $\IF$. See Remark \ref{redcrv}.

\subsection{Curve neighborhoods}We now discuss curve neighborhoods. Let $X$ be a Fano variety (one could consider $X$ to be smooth). Let $d \in H_2(X,\Z)$ be an effective degree. Recall that the moduli space of genus $0$, degree $d$ stable maps with two marked points $\Mb_{0,2}(X,d)$ is endowed with two evaluation maps $\ev_i \colon \Mb_{0,2}(X,d) \to X$, $i=1,2$ which evaluate stable maps at the $i$-th marked point. Let $\Omega \subset X$ be a closed subvariety. The \emph{curve neighborhood} of $\Omega$ is the subscheme 
\[ 
    \Gamma_d(\Omega) := \ev_2( \ev_1^{-1} \Omega) \subset X
\] 
endowed with the reduced scheme structure.

This notion was introduced by Buch, Chaput, Mihalcea and Perrin \cite{BCMP:qkfin} to help study the quantum K-theory ring of cominuscule Grassmannians. It was analyzed further in \cite{buch.m:nbhds,PechShif:CNBDS}. Often, estimates for the dimension of the curve neighborhoods provide vanishing conditions for certain Gromov-Witten invariants.

\subsection{Lattices}
We will study curve neighborhoods through the lens of lattices. That is, let $X$ be a smooth variety containing the subvariety $\Omega$. It's interesting to ask if the set $\{ \Gamma_{d}(\Omega))\}_{d}$ forms a (distributive) lattice where $\leq$ is defined by inclusion of varieties. We will show that this set forms a distributive lattice when $X=\IF$ and $\Omega$ is a Schubert variety.

\subsection{Conjecture $\mathcal{O}$}
We will motivate and define the Combinatorial Property $\mathcal{O}$ which is known to correspond with Conjecture $\mathcal{O}$ in the homogeneous $G/P$ and odd symplectic Grassmannian cases. We begin by reviewing Conjecture $\mathcal{O}$ and conclude with its relation to graph theory in Lemma \ref{lemma: propO}. In Subsection \ref{combversion} we state the combinatorial versions of the quantum Bruhat graph and the Conjecture $\mathcal{O}$. In Subsection \ref{CCO} we state and prove that Combinatorial Property $\mathcal{O}$ holds for $\IF$ in Theorem \ref{Omainresult}.

We recall the precise statement of Conjecture $\mathcal{O}$. Let $X$ be a Fano variety, let $K:=K_X$ be the canonical line bundle of $X$, let $X_D$ be a fundamental divisor of $X$, and let $c_1(X):=c_1(-K) \in H^2(X)$ be the anticanonical class. The Fano index of $X$ is $r$, where $r$ is the greatest integer such that $K_X \cong -rX_D$. The small quantum cohomology ring $(QH^*(X), \star)$ is a graded algebra over $\mathbb{Z}[q]$, where $q$ is the quantum parameter. Consider the specialization $H^{\bullet}(X):=QH^*(X)|_{q=1}$ at $q=1$. The quantum multiplication by the first Chern class $c_1(X)$ induces an endomorphism $\hat{c}_1$ of the finite-dimensional vector space $H^{\bullet}(X)$: \[y \in H^{\bullet}(X) \mapsto \hat{c}_1(y):=(c_1(X)\star y)|_{q=1}. \]

Denote by $\delta_0:=\max \{|\delta| : \delta \mbox{ is an eigenvalue of } \hat{c}_1 \}.$ Then Property $\mathcal{O}$ states the following:
\begin{enumerate}
\item The real number $\delta_0$ is an eigenvalue of $\hat{c}_1$ of multiplicity one.
\item If $\delta$ is any eigenvalue of $\hat{c}_1$ with $|\delta|=\delta_0$, then $\delta=\delta_0 \gamma$ for some $r$-th root of unity $\gamma \in \mathbb{C}$, where $r$ is the Fano index of $X$.
\end{enumerate}
The property $\mathcal{O}$ was conjectured to hold for any Fano, complex manifold $X$ in \cite{GGI}. If a Fano, complex, manifold has Property $\mathcal{O}$ then we say that the space satisfies Conjecture $\mathcal{O}$. Conjecture $\mathcal{O}$ underlies Gamma Conjectures I and II of Galkin, Golyshev, and Iritani. The Gamma Conjectures refine earlier conjectures by Dubrovin on Frobenius manifolds and mirror symmetry. Conjecture $\mathcal{O}$ has already been proved for many cases in \cite{ChLi,LMS,delpezzo,projinter,BFSS}. The Perron-Frobenius theory of nonnegative matrices reduces the proofs that Conjecture $\mathcal{O}$ holds for the homogeneous and the odd symplectic Grassmannian cases to be a graph-theoretic check. This is because Conjecture $\mathcal{O}$ is largely reminiscent of Perron-Frobenius Theory.

The small quantum cohomology is defined as follows. Let $(\alpha_i)_i$ be a basis of $H^*(X)$, the classical cohomology ring, and let $(\alpha_i^{\vee})_i$ be the dual basis for the Poincar\'e pairing. The multiplication is given by \[ \alpha_i \star \alpha_j= \sum_{d\geq 0, k} c_{i,j}^{k,d}q^d\alpha_k \] where $c_{i,j}^{k,d}$ are the 3-point, genus 0, Gromov-Witten invariants corresponding to the classes $\alpha_i, \alpha_j$, and $\alpha_k^{\vee}$. We will make use of the quantum Chevalley formula which is the multiplication of a hyperplane class $h$ with another class $\alpha_j$. The explicit quantum Chevalley formula is the key ingredient used to prove Property $\mathcal{O}$ holds.

\subsection{Sufficient Criterion for Property $\mathcal{O}$ to hold} We recall the notion of the (oriented) quantum Bruhat graph of a Fano variety $X$. The vertices of this graph are the basis elements $\alpha_i \in H^{\bullet}(X):=QH^*(X)|_{q=1}$. There is an oriented edge $\alpha_i \rightarrow \alpha_j$ if the class $\alpha_j$ appears with positive coefficient (where we consider $q>0$) in the quantum Chevalley multiplication $h \star \alpha_i$ for some hyperplane class $h$. We say the graph is strongly connected if there are directed paths for $\alpha_i$ to $\alpha_j$ and $\alpha_j$ to $\alpha_i$ for any $\alpha_i,\alpha_j \in H^{\bullet}(X)$. If there is a path \[ \alpha_{i_1} \rightarrow \alpha_{i_2} \rightarrow \cdots \rightarrow \alpha_{i_k} \rightarrow \alpha_{i_1}\] then we say this is a cycle with cycle length $k$. Using the Perron-Frobenius theory of non-negative matrices, Conjecture $\mathcal{O}$ reduces to a graph-theoretic check of the quantum Bruhat graph. The techniques involving Perron-Frobenius theory used by Li, Mihalcea, and Shifler in \cite{LMS} and Cheong and Li in \cite{ChLi} imply the following lemma:

\begin{lemma}\label{lemma: propO}
If the following conditions hold for a Fano variety $X$:
\begin{enumerate}
\item The matrix representation of $\hat{c}_1$ is nonnegative,
\item The quantum Bruhat graph of $X$ is strongly connected, and 
\item There exists a set of cycles where the Great Common Divisor of the cycle lengths is $r$, the Fano index, in the quantum Bruhat graph of $X$,
\end{enumerate}
then Property $\mathcal{O}$ holds for $X$.
\end{lemma}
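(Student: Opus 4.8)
The plan is to recognize Lemma~\ref{lemma: propO} as a direct application of the Perron--Frobenius theory of nonnegative matrices to the matrix $M$ representing $\hat c_1$ in the Schubert basis $(\alpha_i)$ of $H^\bullet(X)$. First I would fix conventions: by hypothesis (1), $M$ is a nonnegative matrix, and its associated directed graph -- the one with an edge $i\to j$ precisely when the $(i,j)$ entry of $M$ is nonzero -- is, up to transposition, exactly the quantum Bruhat graph of $X$ (an edge $\alpha_i\to\alpha_j$ records that $\alpha_j$ occurs with positive coefficient in $h\star\alpha_i$ at $q=1$). Since $M$ and $M^{\mathsf T}$ share their spectrum and their irreducibility type, the choice of convention is immaterial for what follows. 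Hypothesis (2) -- strong connectivity of the quantum Bruhat graph -- is precisely the statement that $M$ is an irreducible matrix; and for $X$ of positive dimension irreducibility forces $M$ to have no zero row, so $\rho(M)>0$.

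Second, I would invoke the Perron--Frobenius theorem for irreducible nonnegative matrices: the spectral radius $\rho(M)$ is an eigenvalue of $M$, it is real and positive, and it has algebraic multiplicity one. Since by definition $\delta_0=\max\{|\delta|:\delta\text{ an eigenvalue of }\hat c_1\}=\rho(M)$, this is exactly conclusion (1) of Property $\mathcal{O}$.

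Third, for conclusion (2) I would use the finer part of the theory: if $h$ denotes the index of imprimitivity (the period) of the irreducible matrix $M$, then the eigenvalues of $M$ of modulus $\rho(M)$ are exactly the numbers $\rho(M)\,\zeta$ with $\zeta$ an $h$-th root of unity, and moreover $h$ equals the greatest common divisor of the lengths of all directed cycles in the digraph of $M$. In particular $h$ divides the length of every cycle of the quantum Bruhat graph; applying this to the cycles furnished by hypothesis (3), whose lengths have gcd equal to the Fano index $r$, gives $h\mid r$. Hence every $h$-th root of unity is an $r$-th root of unity, so any eigenvalue $\delta$ with $|\delta|=\delta_0$ can be written $\delta=\delta_0\gamma$ with $\gamma^r=1$. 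This is conclusion (2), and the lemma follows.

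There is no deep obstacle here; the content is in correctly matching the combinatorial quantum Bruhat graph of $X$ to the matrix of $\hat c_1$ and in citing the right form of Perron--Frobenius -- the description of the peripheral spectrum through the index of imprimitivity, together with the identification of that index as the gcd of the cycle lengths, as found in standard references on nonnegative matrices. The one point worth flagging is that hypothesis (3) does its work through the divisibility $h\mid r$ rather than through an equality $h=r$: one does not need every $r$-th root of unity to be realized as a peripheral eigenvalue, only that those which are realized are $r$-th roots of unity. (That $r\mid h$ as well, so in fact $h=r$, can be read off from the grading, since traversing a cycle returns the cohomological degree to its starting value while each edge shifts degree by $1-rd$ with $d\ge 0$ the quantum degree of that step -- but this refinement is not needed for Property $\mathcal{O}$.)
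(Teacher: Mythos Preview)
Your proposal is correct and is precisely the Perron--Frobenius argument the paper has in mind: the paper does not give a self-contained proof of this lemma but simply records it as a consequence of the Perron--Frobenius techniques in \cite{LMS} and \cite{ChLi}, and your write-up supplies exactly those details (irreducibility from strong connectedness, simplicity of $\rho(M)$ from the Perron--Frobenius theorem, and the peripheral spectrum via the index of imprimitivity identified with the gcd of cycle lengths).
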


\begin{remark}
Part (2) of Conjecture $\mathcal{O}$ holds automatically if the Fano index is equal to one. That is, we don't need to verify Lemma 1.1 (3) for $\IF(1,2;E)$.
\end{remark}

\subsection{Combinatorial version of the quantum Bruhat graph and Conjecture $\mathcal{O}$} \label{combversion}

Let $X$ be a Fano variety. Let $\mathcal{B}:=\{\alpha_i\}_{i \in I}$ denote a basis of the cohomology ring $H^*(X)$. Denote its first Chern class by \[ c_1(X)=a_1 \mbox{Div}_1+a_2 \mbox{Div}_2+\cdots+a_k \mbox{Div}_k\] where $\mbox{Div}_i \in \mathcal{B}$ is a divisor class for each $1 \leq i \leq k$.

\begin{defn} The combinatorial quantum Bruhat graph of $X$ is define as follows. The vertices of this graph are the basis elements $\alpha_i \in H^*(X)$. The edges set is given as follows:
\begin{enumerate}
\item There is an oriented edge $\alpha_i \rightarrow \alpha_j$ if the class $\alpha_j$ appears with positive coefficient in the Chevalley multiplication $h \star \alpha_i$ for some hyperplane class $h$.
\item Let $\alpha_i=[X(i)]$ and $\alpha_j=[X(j)]$. There is an oriented edge $\alpha_i \rightarrow \alpha_j$ 
\begin{enumerate}
\item if $X(j)$ is an irreducible component of $\Gamma_d(X(i))$ where $d=(d_1,\cdots,d_k)$,
\item and \[ \dim(X(j))-\dim(X(i))=a_1d_1+a_2d_2+\cdots+a_kd_k-1.\]
\end{enumerate}
\end{enumerate}
\end{defn}

Lemma \ref{lemma: propO} leads us to naturally consider the following combinatorial formulation of Conjecture $\mathcal{O}$.

\begin{defn}
    Combinatorial Property $\mathcal{O}$ holds if the following conditions are satisfied:
    \begin{enumerate}
    \item There is a basis with respect to which the associated matrix of $\hat{c}_1$ is nonnegative\footnote{General Fano manifolds may not satisfy this property. For instance, the blowup of $\mathbb{P}^2$ at a point does not satisfy this property.},
    \item The combinatorial quantum Bruhat graph is strongly connected and the Greatest Common Divisor of the cycle lengths is $r=\mbox{GCD}(a_1,a_2,\cdots,a_k).$
    \end{enumerate}
\end{defn}

The purpose of Proposition \ref{QBG} is to assert that the Combinatorial versions of the quantum Bruhat graph and Conjecture $\mathcal{O}$ correspond with their geometric analogue.

\begin{prop}[\cite{LMS,ChLi}] \label{QBG}
The combinatorial quantum Bruhat graphs (Combinatorial Property $\mathcal{O}$) and the quantum Bruhat graphs (Conjecture $\mathcal{O}$) correspond for homogeneous $G/P$ and the odd symplectic Grassmannian.
\end{prop}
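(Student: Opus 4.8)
The plan is to compare the two edge sets term by term, using the (quantum) Chevalley formula on one side and the explicit description of curve neighborhoods on the other. Fix $X$ to be a homogeneous space $G/P$ or an odd symplectic Grassmannian, equipped with the Schubert basis $\mathcal{B}=\{\alpha_w=[X(w)]\}$. An edge $\alpha_w\to\alpha_{w'}$ of the geometric quantum Bruhat graph comes from a term of $h\star\alpha_w$ that is either classical (the $q^0$ part) or quantum (a $q^d$ with $d\neq 0$). The classical terms are, by definition, exactly the edges contributed by clause (1) of the combinatorial quantum Bruhat graph, so the entire content of the statement is the identification of the quantum terms of $h\star\alpha_w$ with the curve-neighborhood edges of clause (2).

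First I would treat $G/P$. By the quantum Chevalley formula (Fulton--Woodward/Peterson), a quantum term $q^d\,\alpha_{w'}$ of $h\star\alpha_w$ forces the cohomological-degree identity $\dim X(w')-\dim X(w)=\langle c_1(X),d\rangle-1$, since $h$ has degree $1$ and $q^d$ has degree $\langle c_1(X),d\rangle$; this is precisely clause (2)(b). By the curve-neighborhood computations of Buch--Mihalcea \cite{buch.m:nbhds}, $\Gamma_d(X(w))$ is a single Schubert variety obtained from $w$ by a Hecke-type product, and the $X(ws_\beta)$ that sit inside it with the maximal allowed dimension $\dim X(w)+\langle c_1(X),d\rangle-1$, for the minimal such $d$, are exactly those appearing in the quantum part of $h\star\alpha_w$; conversely such a codimension-one piece of $\Gamma_d(X(w))$ is swept by a minimal-degree family of rational curves and hence contributes a nonzero three-point Gromov--Witten invariant, so a quantum Chevalley term. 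Because curve neighborhoods in $G/P$ are irreducible there is no ambiguity about which component is chosen, and the two edge sets coincide.

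For the odd symplectic Grassmannian the same dictionary is applied, but now $\Gamma_d(X(w))$ may be reducible \cite{PechShif:CNBDS}, so one must match \emph{each} irreducible component of the correct dimension with a term of the odd symplectic quantum Chevalley formula, and vice versa. This is the explicit verification carried out in \cite{LMS} (using the quantum Chevalley formula together with the curve-neighborhood data of \cite{PechShif:CNBDS}) and in \cite{ChLi}. Once the edge sets are identified, the conditions defining Combinatorial Property $\mathcal{O}$ and the hypotheses of Lemma \ref{lemma: propO} read off each other: nonnegativity of the matrix of $\hat c_1$ is literally the same statement, strong connectedness is a property of the common edge set, and the cycle-length condition matches because $r=\mathrm{GCD}(a_1,\dots,a_k)$ is the Fano index. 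The main obstacle is this last matching in the odd symplectic case --- ensuring the dimension filter of clause (2)(b) selects neither too many components (lower-dimensional pieces of $\Gamma_d(X(w))$, or pieces that appear only for non-minimal $d$) nor too few --- which is handled by the case-by-case enumeration of curve-neighborhood components and quantum Chevalley terms in the cited works.
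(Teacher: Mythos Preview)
The paper does not supply its own proof of this proposition: it is stated with attribution to \cite{LMS,ChLi} and no argument is given in the text. Your sketch is therefore not being compared to an in-paper proof but is an attempt to reconstruct what those references establish.

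As such, your outline is in the right spirit---separate classical from quantum edges, use the grading of $\QH^*$ to match clause~(2)(b), and invoke the curve-neighborhood results of \cite{buch.m:nbhds} and \cite{PechShif:CNBDS} alongside the quantum Chevalley formula for clause~(2)(a)---and you correctly defer the odd symplectic case to the explicit checks in \cite{LMS,ChLi}. One point to tighten in the $G/P$ paragraph: you speak of ``the $X(ws_\beta)$ that sit inside $\Gamma_d(X(w))$ with the maximal allowed dimension,'' but clause~(2)(a) requires $X(j)$ to be an irreducible \emph{component} of $\Gamma_d(X(i))$, not merely a large subvariety. Since $\Gamma_d(X(w))$ is a single Schubert variety in $G/P$, the only candidate for $X(j)$ is $\Gamma_d(X(w))$ itself, and an edge exists precisely when its dimension equals $\dim X(w)+\langle c_1,d\rangle-1$. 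The matching with the quantum Chevalley formula then amounts to the statement that the pairs $(d,w')$ with $\Gamma_d(X(w))=X(w')$ and this exact dimension jump are exactly the $(d,w')$ contributing a $q^d[X(w')]$ term to $h\star[X(w)]$; this is what the references verify, but your phrasing obscures why there is at most one $w'$ per $d$ in the combinatorial graph.
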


\subsection{Organization of the main results}
The manuscript has three objectives. The first is to compute the curve neighborhoods of Schubert varieties in $\IF$. This is done in Section \ref{sec:crvnbd}. We study curve neighborhoods in the context of lattices in Section \ref{sec:lattices}.  In Section \ref{sec:combinatorial} we study the combinatorial analogues of the quantum Bruhat graphs and Conjecture $\mathcal{O}$. We conclude by showing that Combinatorial Property $\mathcal{O}$ holds which motivate futher study of $\IF$.

\section{Notations and Definitions}
\subsection{A small family of odd symplectic partial flag manifolds} Let $n \geq 2$ and $E:=\mathbb{C}^{2n+1}$ be an odd-dimensional complex vector space. An {\it odd symplectic form} $\omega$ on $E$ is a skew-symmetric bilinear form of maximal rank (i.e. with kernel of dimension 1). It will be convenient to extend the form $\omega$ to a (nondegenerate) symplectic from $\tilde{\omega}$ on an even-dimensional space $\wtE \supset E$, and to identify $E \subset \wtE$ with a coordinate hyperplane $\mathbb{C}^{2n+1} \subset \mathbb{C}^{2n+2}$.

For that, let $\{ \e_1,\ldots , \e_{n+1}, \e_{\overline{n+1}}, \ldots , \e_{\bar{2}}, \e_{\bar{1}} \}$ be the standard basis of $\wtE:=\C^{2n+2}$, where $\bar{i}=2n+3-i$. Consider $\wto$ to be the symplectic form on $\wtE$ defined by 
\[ 
    \wto(\e_i,\e_j)=\delta_{i,\bar{j}} \text{ for all $1 \leq i\leq j \leq \bar{1}$}.
\]
The form $\wto$ restricts to the degenerate skew-symmetric form $\omega$ on \[ E=\C^{2n+1}=\left<\e_1, \e_2,\cdots, \e_{2n+1} \right>\] such that the kernel $\ker \omega$ is generated by $\e_1$. Then 
\[
    \omega(\e_i,\e_j)=\delta_{i,\bar{j}} \text{ for all  $1 \leq i\leq j \leq \bar{2}$}.
\] 
Let $F \subset E$ denote the $2n$-dimensional vector space with basis $\{ \e_2, \e_3,\cdots,\e_{2n+1} \}$.

The odd symplectic partial flag manifold we are considering is \[\IF(1,2; E):=\{V_1 \subset V_2 \subset E : \dim V_i=i \mbox{ and } \omega(x,y)=0 \mbox{ for any } x,y \in V_i\}.\] 

The restriction of $\omega$ to $F$ is non-degenerate, hence we can see the odd symplectic Grassmannian as intermediate space
\begin{equation}\label{E:evenodd} 
    \IF(1;F) \subset \IF(1,2; E) \subset \IF(1,2;\wtE), 
\end{equation} 
sandwiched between two odd symplectic flag manifolds. This and the more general ``odd symplectic partial flag varieties'' have been studied in \cite{mihai:odd,pech:quantum,mihalcea.shifler:qhodd,LMS,PechShif:CNBDS}. In particular, Mihai showed that $\IF$ is a smooth Schubert variety in $\IF(1,2;\wtE)$. 

\subsection{The odd symplectic group}

Proctor's \emph{odd symplectic group} (see \cite{proctor:oddsymgrps}) is the subgroup of $\GL(E)$ which preserves the odd symplectic form $\omega$: 
\[ 
    \Sp_{2n+1}(E) := \{ g \in \GL(E) \mid \omega(g \cdot u, g \cdot v) = \omega(u,v), \forall u,v \in E \}. 
\] 

Let $\Sp_{2n}(F)$ and $\Sp_{2n+2}(\widetilde{E})$ denote the symplectic groups which respectively preserve the symplectic forms $\omega_{\mid F}$ and $\wto$. Then with respect to the decomposition $E = F \oplus \ker \omega$ the elements of the odd symplectic group $\Sp_{2n+1}(E)$ are matrices of the form 
\begin{equation*}
    \Sp_{2n+1}(E) = \left\{  \begin{pmatrix}
        \lambda & a  \\
        0 & S  \\
    \end{pmatrix} \mid \lambda \in \C^*, a \in \C^{2n}, S \in \Sp_{2n}(F) \right\}.
\end{equation*}

The symplectic group $\Sp_{2n}(F)$ embeds naturally into $\Sp_{2n+1}(E)$ by $\lambda = 1$ and $a = 0$, but $\Sp_{2n+1}(E)$ is \emph{not} a subgroup of $\Sp_{2n+2}(\wtE)$.\begin{footnote}{However, Gelfand and Zelevinsky \cite{gelfand.zelevinsky} defined another group $\widetilde{\Sp}_{2n+1}$ closely related to $\Sp_{2n+1}$ such that $\Sp_{2n} \subset \widetilde{\Sp}_{2n+1} \subset \Sp_{2n+2}$.}\end{footnote}~Mihai showed in \cite{mihai:odd}*{Prop 3.3} that there is a surjection $P \to \Sp_{2n+1}(E)$ where $P \subset \Sp_{2n+2}(\wtE)$ is the parabolic subgroup which preserves $\ker \omega$, and the map is given by restricting $g \mapsto g_{|E}$. Then the Borel subgroup $B_{2n+2} \subset \Sp_{2n+2}(\wtE)$ of upper triangular matrices restricts to the (Borel) subgroup $B \subset \Sp_{2n+1}(E)$. Similarly, the maximal torus
\[
    T_{2n+2} := \{ \diag(t_1,\cdots,t_{n+1},t_{n+1}^{-1},\cdots, t_1^{-1}): t_1,\cdots,t_{n+1} \in \C^* \} \subset B_{2n+2}
\]
restricts to the maximal torus 
\[ 
    T=\{ \diag(t_1,\cdots,t_{n+1},t_{n+1}^{-1},\cdots, t_{2}^{-1}): t_1,\cdots,t_{n+1} \in \C^* \} \subset B.
\]
Later on we will also require notation for subgroups of $\Sp_{2n}(F)$, viewed as a subgroup of $\Sp_{2n+1}(E)$. We denote by $B_{2n} \subset B$ the Borel subgroup of upper-triangular matrices in $\Sp_{2n}(F)$ and by $\Tsmall$ the maximal torus
\[
    \Tsmall= \{ \diag(1,t_2,\cdots,t_{n+1},t_{n+1}^{-1},\cdots, t_{2}^{-1}): t_2,\cdots,t_{n+1} \in \C^* \} \subset B_{2n}.
\]

Mihai showed that the odd symplectic group $\Sp_{2n+1}(E)$ acts on $\IF$ with three orbits:
\begin{align*}
 \Xo &= \{ V \in \IF \mid \e_1 \notin V_2\} \text{ the open orbit}. \\
Z_2 &= \{ V \in \IF \mid \e_1 \in V_2 \backslash V_1\}.\\
Z_1 &= \{ V \in \IF \mid \e_1 \in V_1\} \text{ the closed orbit}.
\end{align*}
The closed orbit $Z_1$ is isomorphic to $\IF(1,F)$ via the map $V \mapsto V \cap F$.

\subsection{The Weyl group of \texorpdfstring{$\Sp_{2n+2}$}{the symplectic group} and odd symplectic minimal representatives}\label{s:weyl}

There are many possible ways to index the Schubert varieties of isotropic flag manifolds. Here we recall an indexation using
signed permutations. 

Consider the root system of type $C_{n+1}$ with positive roots
\[
    R^+ = \{ t_i \pm t_j \mid 1 \leq  i < j \leq n+1\} \cup \{ 2 t_i \mid 1 \leq  i \leq n+1 \}
\]
and the subset of simple roots 
\[
    \Delta = \{ \alpha_i := t_i-t_{i+1} \mid 1 \leq i \leq n \} \cup \{ \alpha_{n+1}:= 2t_{n+1} \}.
\]
The associated Weyl group $\Wlarge$ is the hyperoctahedral group consisting of \emph{signed permutations}, i.e. permutations $w$ of the elements $\{1, \cdots, n+1,\overline{n+1},\cdots,\overline{1}\}$ satisfying $w(\overline{i})=\overline{w(i)}$ for all $w \in W$. For $1 \leq i \leq n$ denote by $s_i$ the simple reflection corresponding to the root $t_i - t_{i+1}$ and $s_{n+1}$ the simple reflection of $2 t_{n+1}$. In particular, if $1 \leq i \leq n$ then $s_i(i)=i+1$, $s_i(i+1)=i$, and $s_i(j)$ is fixed for all other $j$. Also, $s_{n+1}(n+1)=\overline{n+1}$, $s_{n+1}(\overline{n+1})=n+1$, and $s_{n+1}(j)$ is fixed for all other $j$.

Each subset $ I:=\{ i_1 < \ldots < i_r \} \subset \{ 1, \ldots , n+1 \}$ determines a parabolic subgroup $P:=P_I \leq \Sp_{2n+2}(\wtE)$ with Weyl group $W_{P} = \langle s_i \mid i \neq i_j \rangle$ generated by reflections with indices \emph{not} in $I$. Let $\Delta_P:= \{ \alpha_{i_s} \mid i_s \notin \{ i_1, \ldots , i_r \} \}$ and $R_P^+ := \Span_\Z \Delta_P \cap R^+$; these are the positive roots of $P$. Let $\ell \colon W \to \mathbb{N}$ be the length function and denote by $W^{P}$ the set of minimal length representatives of the cosets in $W/W_{P}$. The length function descends to $W/W_P$ by $\ell(u W_P) = \ell(u')$ where $u' \in W^P$ is the minimal length representative for the coset $u W_P$. We have a natural ordering 
\[ 
    1 < 2 < \cdots < n+1 < \overline{n+1} < \cdots < \overline{1},
\]
which is consistent with our earlier notation $\overline{i} := 2n+3 - i$. 

Let $P$ be the parabolic obtained by excluding the reflections $s_1$ and $s_2$. Then the minimal length representatives $W^P$ have the form $(w(1)|w(2)|w(3)<\cdots <w(n) \leq n+1)$. Since the last $n-1$ labels are determined from the first 2 labels, we will identify an element in $W^P$ with $(w(1)|w(2))$.


\begin{example}\label{Ex:minlengthrep}
The reflection $s_{t_1+t_2}$ is given by the signed permutation \[s_{t_1+t_2}(1)=\bar{2},  s_{t_1+t_2}(2)=\bar{1}, \mbox{ and }s_{t_1+t_2}(i)=i \mbox{ for all }3 \leq i \leq n+1.\] The minimal length representative of $s_{t_1+t_2}\Wlarge^P$ is $(\bar{2}|\bar{1})$.
\end{example}


\subsection{Schubert Varieties in even and odd symplectic partial flag manifolds} 

Recall that the even symplectic partial flag manifold $\Xev=\IF(1,2; \tilde{E})$ is a homogeneous space $\Sp_{2n+2}/P$, where $P$ is the parabolic subgroup generated by the simple reflections $s_i$ with $i \neq 1,2$. For each $w \in \Wlarge^P$ let $\Xev(w)^\circ:=B_{2n+2} w B_{2n+2}/P$ be the \emph{Schubert cell}. This is isomorphic to the space $\C^{\ell(w)}$. Its closure $\Xev(w):=\overline{\Xev(w)^\circ}$ is the \emph{Schubert variety}. We might occasionally use the notation $\Xev(w \Wlarge_P)$ if we want to emphasize the corresponding coset, or if $w$ is not necessarily a minimal length representative. Recall that the Bruhat ordering can be equivalently described by $v \leq w$ if and only if $\Xev(v) \subset \Xev(w)$. Set 
\begin{equation*}\label{E:w0} 
    w_0 = (\bar{2}|\bar{3})
\end{equation*}
this is an element in $\Wlarge$.
Recall that the odd symplectic Borel subgroup is $B= B_{2n+2} \cap \Sp_{2n+1}$. The following results were proved by Mihai \cite{mihai:odd}*{\S 4}. 

\begin{remark}
Here $w_0$ is the longest element for $\IF$ which is different than the longest element for $\IF(1,2;\tilde{E})$.
\end{remark}
\begin{prop}\label{prop:schubert}
\begin{enumerate}[(a)]
    \item The natural embedding $\iota: X=\IF \hookrightarrow \Xev=\IF(1,2;\tilde{E})$ identifies $\IF$ with the (smooth) Schubert subvariety 
    \[ 
        \Xev(w_0 \Wlarge_P) \subset \IF(1,2;\tilde{E}).
    \] 
    \item The Schubert cells (i.e. the $B_{2n+2}$-orbits) in $\Xev(w_0)$ coincide with the $B$-orbits in $\IF$. In particular, the $B$-orbits in $\IF$ are given by the Schubert cells $\Xev(w)^\circ \subset \IF(1,2;\tilde{E})$ such that $w \leq w_0$.
\end{enumerate}
\end{prop}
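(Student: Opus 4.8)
The plan is to reconstruct Mihai's argument in three stages: realize $\iota(\IF)$ as a simple incidence locus, recognize that locus as a single Schubert variety of $\IF(1,2;\wtE)$, and then read off the orbit statement from the observation that $B_{2n+2}$ acts on that locus only through its restriction to $E$.

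\emph{Stage 1 (the image of $\iota$).} Since $\wto$ restricts to $\omega$ on $E$, any flag $(V_1\subset V_2\subset E)$ that is $\omega$-isotropic is automatically $\wto$-isotropic, so $\iota$ is a well-defined injection with image
\[
\iota(\IF)=\{\,V_\bullet\in\IF(1,2;\wtE)\ :\ V_2\subseteq E\,\}.
\]
With respect to the ordered basis $\e_1,\dots,\e_{n+1},\e_{\overline{n+1}},\dots,\e_{\bar1}$, the subspace $E=\langle\e_1,\dots,\e_{\bar2}\rangle$ is the coordinate hyperplane $F_{2n+1}$ of the standard $\wto$-isotropic flag, so ``$V_2\subseteq E$'' is an incidence condition and $\iota(\IF)$ is closed. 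The Borel $B_{2n+2}$ of upper-triangular matrices fixes $F_{2n+1}=E$, hence stabilizes $\iota(\IF)$.

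\emph{Stage 2 (it is a Schubert variety).} First, $\IF$ is irreducible: viewed as the zero locus of a nonzero section of $\wedge^2\mathcal{S}_2^\vee$ on the flag variety $\Fl(1,2;E)$, it is equidimensional of dimension $4n-2$, while its closed subset $\{V_\bullet:\e_1\in V_2\}$ has dimension $2n<4n-2$ for $n\geq2$; hence the open $\Sp_{2n+1}(E)$-orbit $\Xo=\{\e_1\notin V_2\}$, being homogeneous and therefore irreducible, meets every component and is dense, so $\IF=\overline{\Xo}$. A closed, irreducible, $B_{2n+2}$-stable subset of $\IF(1,2;\wtE)$ is a single Schubert variety $\Xev(w_0\Wlarge_P)$, with $w_0$ the unique longest $w\in\Wlarge^P$ whose fixed point $e_w$ lies in $\iota(\IF)$. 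Now $e_w\in\iota(\IF)$ iff $\e_{w(1)},\e_{w(2)}\in E$, i.e. $w(1),w(2)\neq\bar1$; maximizing the length under this constraint (the overall longest representative in $\Wlarge^P$ places $\bar1$ in the first slot) forces $w(1)=\bar2$ and then $w(2)=\bar3$, so $w_0=(\bar2|\bar3)$. Finally the smoothness of $\IF$ — equivalently of $\Xev(w_0\Wlarge_P)$, which by $B$-equivariance need only be checked at the $B$-fixed point $e_{\mathrm{id}}=(\langle\e_1\rangle\subset\langle\e_1,\e_2\rangle)$ — I would take from Mihai's explicit chart computation (or from a smoothness criterion for symplectic Schubert varieties). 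This gives (a).

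\emph{Stage 3 (orbits).} For (b), recall that the $B_{2n+2}$-orbits in $\Xev(w_0)$ are exactly the Schubert cells $\Xev(w)^\circ$ with $w\leq w_0$. On the other hand, for $V_\bullet\in\iota(\IF)$ one has $V_2\subseteq E$, so $b\cdot V_\bullet$ depends only on $b|_E$; thus the $B_{2n+2}$-action on $\iota(\IF)$ factors through the surjective restriction homomorphism $B_{2n+2}\twoheadrightarrow B$, $b\mapsto b|_E$, and $\iota$ is $B$-equivariant. Consequently each $B_{2n+2}$-orbit in $\Xev(w_0)$ coincides with a $B$-orbit in $\IF$ and conversely, so the $B$-orbits of $\IF$ are precisely the cells $\Xev(w)^\circ$ with $w\leq w_0$.

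\emph{Main obstacle.} The substantive points are (i) pinning down the exact representative $w_0=(\bar2|\bar3)$, which rests on the combinatorics of the Bruhat order on $\Wlarge^P$ in type $C_{n+1}$ — that $\{w:w(1),w(2)\neq\bar1\}$ is an order ideal with a unique maximum, and computing that maximum — and (ii) the smoothness of $\IF$, which is a genuine theorem (the technical core of Mihai's \S4) rather than a formality. The remaining steps — well-definedness of $\iota$, the incidence description, $B_{2n+2}$-stability, irreducibility, and the orbit bookkeeping in (b) — are routine.
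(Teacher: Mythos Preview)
The paper does not give its own proof of this proposition: it is stated with the attribution ``The following results were proved by Mihai \cite{mihai:odd}*{\S 4}'' and no argument is supplied. Your proposal is therefore not competing with an in-paper proof but with an external reference, and what you have written is a faithful and essentially correct reconstruction of the standard argument (incidence description of $\iota(\IF)$, $B_{2n+2}$-stability and irreducibility forcing it to be a single Schubert variety, identification of $w_0=(\bar2|\bar3)$ via the $T$-fixed points, and the factorization $B_{2n+2}\twoheadrightarrow B$ for the orbit comparison).

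One minor remark: your dimension count $\dim\IF=4n-2$ is correct (for $n=2$ the Hasse diagram in the paper shows $\ell(\bar2|\bar3)=6=4\cdot2-2$); the value $4n-6$ printed later in the paper appears to be a typo, so the discrepancy is not your error. The only places where you defer rather than argue---smoothness of $\IF$ and the explicit Bruhat-maximality of $(\bar2|\bar3)$---are exactly the points that require Mihai's chart computations or a type-$C$ length formula, and flagging them as the substantive inputs is appropriate.
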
 

We discuss Schubert cells or varieties in the odd symplectic case. For each $w \leq w_0$ such that $w \in \Wlarge^P$, we denote by $X(w)^\circ$, and $X(w)$, the Schubert cell, respectively the Schubert variety in $\IF$. The same Schubert variety $X(w)$, but regarded in the even symplectic partial flag manifold is denoted by $\Xev(w)$. For further use we note that $\IF$ has complex dimension $\ell(\bar{2}|\bar{3})=4n-6$, $\IF(1,2;\tilde{E})$ has complex dimension $\ell(\bar{1}|\bar{2})=4n-4$, and $\IF$ has codimension 2 in $\IF(1,2;\tilde{E})$. Further, a Schubert variety $X(w)$ in $\IF$ is included in the closed $\Sp_{2n+1}$-orbit $Z_1$ of if and only if it has a minimal length representative $w \leq w_0$ such that $w(1)=1$.

Define the set $\Wodd:= \{ w \in \Wlarge \mid w \leq w_0 \}$ and call its elements \emph{odd symplectic permutations}. The set $\Wodd$ consists of permutations $w \in W$ such that $w(j) \neq \bar{1}$ for any $1 \leq j \leq n+1$ \cite{mihai:odd}*{Prop. 4.16}.

\subsection{Divisors and the first Chern class}

\begin{lemma}
The odd symplective flag manifold $\IF$ has two divisor classes. If $n=2$ then 
\[[X(Div_1)]:=[X(\bar{3} \mid \bar{2})] \hspace{12pt} and \hspace{12pt} [X(Div_2)]:=[X(\bar{2} \mid 3)].\]

If $n>2$ then
\[[X(Div_1)]:=[X(\bar{3} \mid \bar{2})] \hspace{12pt} and \hspace{12pt} [X(Div_2)]:=[X(\bar{2} \mid \bar{4})].\]

\end{lemma}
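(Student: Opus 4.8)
The plan is to identify the divisor (codimension-one) Schubert varieties of $\IF$ directly from the combinatorics of minimal length representatives in $\Wlarge^P$, using the dimension count already recorded in the excerpt: $\dim \IF = \ell(\bar 2 \mid \bar 3) = 4n-6$, so a divisor corresponds to $w \in \Wodd \cap \Wlarge^P$ with $\ell(w) = 4n-7$. Since $X(w) \subset X(w_0)$ exactly when $w \le w_0 = (\bar 2 \mid \bar 3)$, and Schubert inclusions in $\IF$ agree with those in $\Xev$ by Proposition~\ref{prop:schubert}, the divisor classes are generated by those $w \lessdot w_0$ covered by $w_0$ in the Bruhat order on $\Wlarge^P$. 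First I would enumerate the elements $w \in \Wlarge^P$ with $w \lessdot w_0$: these are obtained from $w_0 = (\bar 2 \mid \bar 3)$ by all Bruhat covers that stay inside $\Wlarge^P$ (i.e. still have the tail $w(3) < \cdots < w(n) \le n+1$ increasing) and still satisfy $w \le w_0$, hence remain odd symplectic permutations (no entry equal to $\bar 1$).

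The second step is the actual bookkeeping. Writing $w_0$ as the signed permutation $(\bar 2 \mid \bar 3 \mid 1 < 4 < 5 < \cdots < n+1)$, one computes its covers by multiplying on the right by reflections $s_{t_i \pm t_j}$ or $s_{2t_i}$ that drop length by exactly one and preserve membership in $W^P$. I expect exactly two such covers, which is consistent with the statement that $\IF$ has Picard rank two (indeed $\IF$ is a Schubert variety in $\IF(1,2;\tilde E)$, which itself has Picard rank two since $P$ is obtained by excluding $s_1,s_2$, and restriction of divisors is surjective by Mihai's results). One cover should change the first coordinate $\bar 2 \mapsto \bar 3$, producing $(\bar 3 \mid \bar 2)$ after re-sorting — this is $\mathrm{Div}_1$ and works uniformly in $n$. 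The other cover should act on the second coordinate $\bar 3$; here the answer depends on $n$: for $n = 2$ the available entries below $\bar 3$ force $(\bar 2 \mid 3)$, whereas for $n > 2$ the correct length-$(4n-7)$ cover is $(\bar 2 \mid \bar 4)$. I would verify the length of each candidate either by the explicit length formula for signed permutations (number of inversions plus negative-sum statistics) or by exhibiting a reduced word, and confirm each lies in $\Wodd$ by checking no entry equals $\bar 1$.

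The main obstacle is the case distinction at $n = 2$: one has to be careful that when $n = 2$ the "natural" candidate $(\bar 2 \mid \bar 4)$ is not even defined (there is no index $4 = n+2$ inside $\{1,\dots,n+1\} = \{1,2,3\}$), and that the entry $3 = n+1$ plays the role that $\bar 4$ plays for larger $n$ — concretely, for $n = 2$ we have $\overline{n+1} = \bar 3$ and the next element down in the order $1 < 2 < 3 < \bar 3 < \bar 2 < \bar 1$ is $3$ itself, so the cover of $\bar 3$ in the second slot is $3$. I would therefore present the $n=2$ computation completely separately, listing $\Wlarge^P$ explicitly for $\Sp_6$ (where $\IF$ has dimension $2$, a surface) and checking the two divisors by hand, and then give the general-$n$ argument. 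A secondary point to be careful about is that a priori a divisor class need not be a Schubert divisor, but since the Schubert classes form a basis of $H^*(\IF)$ and $H^2(\IF)$ is spanned by Schubert classes of complex codimension one, listing the codimension-one Schubert varieties does determine all divisor classes; I would note this so the lemma's phrasing ("has two divisor classes") is justified.
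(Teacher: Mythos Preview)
The paper does not provide a proof of this lemma; it is stated without justification (as is the subsequent lemma on the first Chern class). Your plan---identify the divisor Schubert classes as those $w \in \Wodd \cap W^P$ covered by $w_0 = (\bar 2 \mid \bar 3)$ in the Bruhat order on $W^P$, and enumerate these covers explicitly---is the natural and correct way to establish the result, and your explanation of why the $n=2$ case degenerates (there is no index $4$ available, so the cover of $\bar 3$ in the second slot is $3 = n+1$ rather than $\bar 4$) is exactly right.

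One correction: you quote $\dim \IF = 4n-6$ from the paper and then say that for $n=2$ the variety $\IF$ is a surface. That formula in the paper is a typo; the correct dimension is $\ell(\bar 2 \mid \bar 3) = 4n-2$ (and $\dim \IF(1,2;\tilde E) = 4n$), as one sees immediately from the moment graph in Figure~\ref{IFn=2MG} where $(\bar 2 \mid \bar 3)$ sits at length $6 = 4\cdot 2 - 2$. This does not affect your argument---the divisors are still the codimension-one Schubert varieties, i.e.\ the Bruhat covers of $w_0$---but you should not carry the erroneous dimension into your write-up.
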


\begin{lemma}
The first Chern class of $\IF$ is
\[ c_1(\IF)=2 \cdot [X(Div_1)]+(2n-1) \cdot [X(Div_2)].\]
\end{lemma}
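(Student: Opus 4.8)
The plan is to compute the first Chern class $c_1(\IF)$ via the quantum (equivalently, classical) Chevalley formula, using the identification of $\IF$ with the smooth Schubert variety $\Xev(w_0 \Wlarge_P) \subset \IF(1,2;\tilde E)$ from Proposition \ref{prop:schubert}. Since $c_1(\IF) = c_1(-K_{\IF})$ and $\IF$ is a smooth projective variety with a cell decomposition indexed by $\Wodd$, the class $c_1(\IF)$ lies in $H^2(\IF)$, which by the previous lemma is spanned by $[X(Div_1)]$ and $[X(Div_2)]$. So it suffices to determine the two coefficients $a_1, a_2$ in $c_1(\IF) = a_1 [X(Div_1)] + a_2 [X(Div_2)]$.

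First I would recall (or re-derive) the standard fact that for a smooth Schubert variety $X(w_0)$ with a $B$-action and finitely many cells, the anticanonical class is $c_1 = \sum_{w \lessdot w_0} [X(w)]$, the sum over the Schubert divisors, \emph{weighted} by the appropriate Chevalley coefficients — more precisely, one obtains $c_1(\IF)$ by summing $\langle c_1, \text{curve class} \rangle$-type contributions. Concretely, the cleanest route is: the tangent bundle of a smooth Schubert variety at the $T$-fixed point $e_{w_0}$ has weights given by the roots $\alpha$ with $s_\alpha w_0 \lessdot w_0$ (equivalently the inversions), so $c_1(\IF) = \sum_{\alpha : \ell(w_0 s_\alpha) = \ell(w_0)-1} [\text{corresponding divisor}]$ after collecting terms by which of the two divisor classes each covering relation contributes to. Thus the computation reduces to: (i) list all signed permutations $w \in \Wodd$ with $w \lessdot w_0 = (\bar 2 \mid \bar 3)$ in Bruhat order (these are the Schubert divisors of $\IF$); (ii) for each such $w$, express $[X(w)]$ in terms of $[X(Div_1)]$ and $[X(Div_2)]$ — but since each is itself a divisor class, each equals one of the two generators; (iii) count how many covering relations land on $Div_1$ versus $Div_2$.

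For step (i), using the explicit description of $\Wlarge^P$ as pairs $(w(1) \mid w(2))$ and of Bruhat order on these cosets, I would enumerate the elements covered by $(\bar 2 \mid \bar 3)$: lowering the first entry one step (giving $(\bar 3 \mid \bar 2)$, which is $Div_1$), and the chains of covers that decrease the second coordinate. The key arithmetic is that $\ell(\bar 2 \mid \bar 3) = 4n-6$, while $\ell(Div_1) = \ell(\bar 3 \mid \bar 2) = 4n-7$ and $\ell(Div_2) = 4n-7$ as well; one checks $(\bar 2 \mid \bar 3)$ covers $(\bar 3 \mid \bar 2)$ once and covers the $Div_2$-type divisor with the remaining multiplicity. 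I would verify the counts separately for $n=2$ (where $Div_2 = (\bar 2 \mid 3)$) and $n > 2$ (where $Div_2 = (\bar 2 \mid \bar 4)$), confirming that in both regimes the multiplicity of $Div_1$ is $2$ and that of $Div_2$ is $2n-1$. A useful consistency check: the Fano index is then $r = \gcd(2, 2n-1) = 1$, matching the Remark after Lemma \ref{lemma: propO} that part (2) of Conjecture $\mathcal O$ is automatic.

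The main obstacle I anticipate is step (i)--(iii) done carefully: correctly identifying \emph{all} Schubert divisors of $\IF$ inside $\Xev$ and computing the multiplicity with which each covering relation contributes to $c_1$. Because $\IF$ sits as a (smooth but non-homogeneous) Schubert variety rather than a full $G/P$, one cannot blindly quote the $G/P$ quantum Chevalley formula; instead one must use the restriction of the even symplectic Chevalley formula together with the constraint $w \le w_0$, and be careful that some covers in $\Xev$ of elements $\le w_0$ may leave $\Wodd$. The bookkeeping of which roots $\alpha$ with $w_0 s_\alpha \lessdot w_0$ survive this restriction, and translating each into a multiple of $[X(Div_1)]$ or $[X(Div_2)]$, is the technical heart; once that table is assembled, the coefficients $2$ and $2n-1$ fall out, and the $n=2$ versus $n>2$ dichotomy is handled by the two cases already separated in the divisor lemma.
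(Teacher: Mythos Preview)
The paper states this lemma without proof, presumably treating it as known from the prior literature on odd symplectic flag varieties (Mihai, Pech, Mihalcea--Shifler). So there is no ``paper's own proof'' to compare against; nonetheless your proposal, as written, would not establish the result.

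The gap is in your mechanism for extracting the coefficients $a_1,a_2$. You propose to ``list all $w\in\Wodd$ with $w\lessdot w_0$'' and then ``count how many covering relations land on $Div_1$ versus $Div_2$.'' But the preceding lemma says precisely that there are \emph{two} such covers, namely $Div_1=(\bar3\mid\bar2)$ and $Div_2$ itself; your count therefore produces $c_1=[X(Div_1)]+[X(Div_2)]$, not $2\,[X(Div_1)]+(2n-1)\,[X(Div_2)]$. The underlying error is the sentence ``the tangent bundle \ldots\ has weights given by the roots $\alpha$ with $s_\alpha w_0\lessdot w_0$ (equivalently the inversions)'': covers and inversions are not the same thing. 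The tangent space of $\IF$ at a $T$-fixed point has $\ell(w_0)$ weights (one per inversion of $w_0$), not two, and it is the full sum of those weights---expressed in the basis of fundamental weights $\omega_1,\omega_2$---that yields the coefficients $2$ and $2n-1$.

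A route that does work: the coefficients are the pairings $a_i=\langle c_1(\IF),[C_i]\rangle$, where $[C_1],[C_2]\in H_2(\IF,\Z)$ are the degree $(1,0)$ and $(0,1)$ curve classes dual to $[X(Div_1)],[X(Div_2)]$. You can evaluate these either by summing \emph{all} $T$-weights of $T_p\IF$ at a convenient fixed point $p$ and rewriting the sum in the $\omega_1,\omega_2$ basis, or by computing $c_1(\Xev)=2\,[Div_1^{ev}]+2n\,[Div_2^{ev}]$ for the even symplectic two-step flag and then correcting via adjunction along the inclusion $\IF\hookrightarrow\Xev$ (this is where the drop from $2n$ to $2n-1$ appears). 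Either computation is routine once set up; the cover-counting you describe is not the right invariant.
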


\section{The Moment Graph}
Sometimes called the GKM graph, the \emph{moment graph} of a variety with an action of a torus $T$ has a vertex for each $T$-fixed point, and an edge for each $1$-dimensional torus orbit. The description of the moment graphs for flag manifolds is well known, and it can be found e.g in \cite{kumar:kacmoody}*{Ch. XII}. In this section we consider the moment graphs for $X=\IF(1,2; E) \subset \Xev = \IF(1,2;\tilde{E})$. As before let $P \subset \Sp_{2n+2}$ be the maximal parabolic for $\Xev$. Recall that we will identify an element in $W^P$ with $(w(1)|w(2))$.

\subsection{Moment graph structure of $\IF(1,2;\tilde{E})$}
The moment graph of $\Xev$ has a vertex for each $w \in W^P$, and an edge $w\rightarrow ws_{\alpha}$ for each \[ 
    \alpha \in R^+ \setminus R_{P}^+ = \{ t_i - t_j \mid 1 \leq i \leq 2, i< j \leq n+1 \} \cup \{ t_i + t_j, 2 t_i \mid 1 \leq i \leq 2, 1 \leq i <j \leq n+1  \}. 
\]
Geometrically, this edge corresponds to the unique torus-stable curve $C_\alpha(w)$ joining $w$ and $ws_\alpha$. The curve $C_\alpha(w)$ has degree $d=(d_1,d_2)$, where $\alpha^\vee + \Delta_P^\vee = d_1 \alpha_1^\vee +d_2 \alpha_2^\vee +\Delta_P^\vee$. In the next section we classify the positive roots by their degree. In order to perform curve neighborhood calculations we will we give precise combinatorial description of the moment graph.

\begin{defn}\label{def:moment-graph-combinat}
Define the following to describe moment graph combinatorics.

\begin{enumerate}
\item Define the following four sets which partitions $R^+ \setminus R_{P}^+$.
\begin{enumerate}
\item $R^+_{(1,0)}=\{t_1-t_2\}$;
\item $R^+_{(0,1)}=\{t_2 \pm t_j \mid 3 \leq j \leq n+1 \} \cup \{2t_2\}$;
\item $R^+_{(1,1)}=\{t_1 \pm t_j: 3 \leq j \leq n+1 \} \cup \{2t_1\}$;
\item $R^+_{(1,2)}=\{t_1+t_2\}$.
\end{enumerate}
\item A {\it chain of degree $d$} is a path in the (unoriented) moment graph where the sum of edge degrees equals $d$. We will often use that notation $uW_P \overset{d}{\rightarrow} vW_P$ to denote such a path.
\end{enumerate}
\end{defn}
In the next lemma we give a formula for the degree $d$ of a chain which is useful to calculate curve neighborhoods. In particular, we will see that the degree of a chain is determined by summing the weights of the edges traversed in the moment graph.
\begin{lemma} \label{lem:3.2}
Let $u,v \in W^P$ be connected by a degree $d$ chain
\[
    (u W_P \overset{d} \to vW_P) = (uW_P \to us_{\alpha_1} W_P \to \dots \to us_{\alpha_1}s_{\alpha_2}\ldots s_{\alpha_t}W_P)
\]
where $vW_P=us_{\alpha_1}s_{\alpha_2}\ldots s_{\alpha_t}W_P$ and the $\alpha_i$ are in $ R^+ \backslash R^+_P$. Then:

\begin{eqnarray*}
d=\left(\#\left\{\alpha_i \in R^+_{(1,0)}\right\}, \#\left\{\alpha_i \in R^+_{(0,1)}\right\} \right)&+&\left(\#\left\{\alpha_i \in R^+_{(1,1)}\right\}, \#\left\{\alpha_i \in R^+_{(1,1)}\right\} \right)\\
&+&\left(\#\left\{\alpha_i \in R^+_{(1,2)}\right\}, 2 \cdot \#\left\{\alpha_i \in R^+_{(1,2)}\right\} \right).
\end{eqnarray*}
\end{lemma}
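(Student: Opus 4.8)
The plan is to reduce the statement to a single, purely local computation: the degree of one edge $w W_P \to w s_\alpha W_P$ in the moment graph, for each $\alpha \in R^+ \setminus R^+_P$. Since the degree of a chain is by definition the sum of the degrees of the edges traversed (and this sum is taken in $H_2$, hence is additive and independent of the order of traversal), once I know the contribution of each individual root $\alpha_i$ to the degree vector, summing over the chain gives exactly the right-hand side of the claimed formula, grouped according to which of the four sets $R^+_{(1,0)}$, $R^+_{(0,1)}$, $R^+_{(1,1)}$, $R^+_{(1,2)}$ contains $\alpha_i$. So the real content is: for $\alpha \in R^+_{(d_1,d_2)}$, the edge $w W_P \to w s_\alpha W_P$ has degree $(d_1,d_2)$.

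To prove this, I would use the formula quoted just before Definition~\ref{def:moment-graph-combinat}: the edge $C_\alpha(w)$ has degree $d = (d_1,d_2)$ where $\alpha^\vee + \Delta_P^\vee = d_1\alpha_1^\vee + d_2\alpha_2^\vee + \Delta_P^\vee$, i.e.\ one writes the coroot $\alpha^\vee$ in the basis of simple coroots $\{\alpha_1^\vee,\dots,\alpha_{n+1}^\vee\}$ and reads off the coefficients of $\alpha_1^\vee$ and $\alpha_2^\vee$; crucially this is independent of $w$. For type $C_{n+1}$ with the labelling of the excerpt, the coroots are $(t_i - t_j)^\vee = \alpha_i^\vee + \alpha_{i+1}^\vee + \cdots + \alpha_{j-1}^\vee$, $(2t_i)^\vee = \alpha_i^\vee + \alpha_{i+1}^\vee + \cdots + \alpha_n^\vee + \alpha_{n+1}^\vee$ (the long root $\alpha_{n+1}=2t_{n+1}$ has coroot $\alpha_{n+1}^\vee = t_{n+1}^\vee$), and $(t_i+t_j)^\vee = \alpha_i^\vee + \cdots + \alpha_{j-1}^\vee + 2(\alpha_j^\vee + \cdots + \alpha_n^\vee) + \alpha_{n+1}^\vee$. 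From these expansions I extract the $(\alpha_1^\vee,\alpha_2^\vee)$-coefficients case by case:
\begin{itemize}
\item $\alpha = t_1 - t_2$: coefficients $(1,0)$.
\item $\alpha = t_2 \pm t_j$ with $3 \le j \le n+1$, or $\alpha = 2t_2$: the $\alpha_1^\vee$-coefficient is $0$ and the $\alpha_2^\vee$-coefficient is $1$, giving $(0,1)$.
\item $\alpha = t_1 \pm t_j$ with $3 \le j \le n+1$, or $\alpha = 2t_1$: both coefficients equal $1$, giving $(1,1)$.
\item $\alpha = t_1 + t_2$: the $\alpha_2^\vee$-coefficient is $2$ (it appears with multiplicity two in the long-root coroot expansion), giving $(1,2)$.
\end{itemize}
This matches the four sets of Definition~\ref{def:moment-graph-combinat} exactly, and in particular a root in $R^+_{(1,1)}$ contributes $(1,1)$ (explaining the middle term of the formula) and the single root in $R^+_{(1,2)}$ contributes $(1,2)$ (explaining the last term). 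Summing over $i = 1,\dots,t$ and collecting terms by membership in the four sets yields precisely the displayed expression for $d$.

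The main obstacle, and the only place where care is genuinely needed, is the combinatorics of the coroot expansions in type $C_{n+1}$ — in particular getting the short/long root normalization right so that $t_1 + t_2$ picks up the coefficient $2$ on $\alpha_2^\vee$ rather than $1$, and checking that the parabolic $P$ (obtained by excluding $s_1, s_2$) only kills simple coroots $\alpha_i^\vee$ with $i \ge 3$, so that passing to $W/W_P$ does not alter the $\alpha_1^\vee$- or $\alpha_2^\vee$-coefficients. Everything else is bookkeeping: additivity of degree along a chain is immediate since $H_2$ is a group, and the $w$-independence of the edge degree is exactly the content of the formula recalled from the moment-graph description, so no separate argument is required there.
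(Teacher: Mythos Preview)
Your proposal is correct. The paper in fact states Lemma~\ref{lem:3.2} without proof, treating it as an immediate consequence of Definition~\ref{def:moment-graph-combinat}: the sets $R^+_{(d_1,d_2)}$ are named precisely so that each root in $R^+_{(d_1,d_2)}$ contributes an edge of degree $(d_1,d_2)$, and the degree of a chain is by definition the sum of edge degrees. Your argument supplies exactly the verification the paper omits, namely the type-$C_{n+1}$ coroot expansions confirming that the partition into $R^+_{(1,0)},R^+_{(0,1)},R^+_{(1,1)},R^+_{(1,2)}$ matches the $(\alpha_1^\vee,\alpha_2^\vee)$-coefficients of $\alpha^\vee$ modulo $\Delta_P^\vee$; this is the same implicit approach, just made explicit.
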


\subsection{Moment graph structure of $\IF$}

The moment graph of $\IF$ is the full subgraph of $\IF(1,2;2n+2)$ determined by the vertices $w \in W^P \cap  W^{odd}$. Notice that the orbits of $T$ and $T_{2n+2}$ coincide, therefore we do not distinguish between the moment graphs for these tori. See Figure \ref{IFn=2MG}.

\begin{figure}
\caption{ The moment graph for $\IF$ when $n=2$.}
\label{IFn=2MG}
\begin{tikzpicture}[scale=.65]
\tikzstyle{vertex} = [circle, minimum size = 2pt, inner sep = 0cm]
\tikzstyle{ledge} = [green]
\tikzstyle{redge}= [orange]
\tikzstyle{bedge} = [ blue]
\tikzstyle{twoedge} = [purple]

\node[vertex](v1) at (0,1) {$(1 \mid 2)$};
\node[vertex](v2) at (-2,3) {$(2 \mid 1)$};
\node[vertex](v3) at (2,3) {$(1 \mid 3)$};
\node[vertex](v4) at (-3,5) {$(2 \mid 3)$};
\node[vertex](v5) at (0,5) {$(3 \mid 1)$};
\node[vertex](v6) at (3,5) {$(1 \mid \bar{3})$};
\node[vertex](v7) at (-4.5,7) {$(3 \mid 2)$};
\node[vertex](v8) at (-2,7) {$(2 \mid \bar{3})$};
\node[vertex](v9) at (2,7) {$(\bar{3} \mid 1)$};
\node[vertex](v10) at (4.5,7) {$(1 \mid \bar{2})$};
\node[vertex](v11) at (-3,9) {$(\bar{3} \mid 2)$};
\node[vertex](v12) at (0,9) {$(3 \mid \bar{2})$};
\node[vertex](v13) at (3,9) {$(\bar{2} \mid 1)$};
\node[vertex](v14) at (-2,11) {$(\bar{2} \mid 3)$};
\node[vertex](v15) at (2,11) {$(\bar{3} \mid \bar{2})$};
\node[vertex](v16) at (0,13) {$(\bar{2}\mid \bar{3})$};

\draw[ledge](v1) to [bend right=10](v2);
\draw[redge](v1) to [bend right=5](v3);
\draw[redge](v1) to (v6);
\draw[bedge](v1) to [bend left=20](v7);
\draw[bedge](v1) to [bend right=45](v11);
\draw[redge](v1) to [bend right=20](v10);

\draw[redge](v2) to [bend right=5](v4);
\draw[redge](v2) to (v8);
\draw[bedge](v2) to [bend left=20](v5);
\draw[bedge](v2) to [bend left=20](v9);
\draw[bedge](v2) to [bend left=20](v13);

\draw[bedge](v3) to [bend left=20](v4);
\draw[bedge](v3) to [bend right=50](v14);
\draw[ledge](v3) to [bend right=10](v5);
\draw[redge](v3) to [bend right=10](v10);
\draw[redge](v3) to [bend right=5](v6);

\draw[ledge](v4) to [bend right=10](v7);
\draw[bedge](v4) to [bend left=35](v14);
\draw[twoedge](v4) to [bend right=20](v15);
\draw[redge](v4) to [bend right=5](v8);

\draw[bedge](v5) to [bend left=20](v13);
\draw[redge](v5) to [bend right=5](v7);
\draw[redge](v5) to (v12);
\draw[bedge](v5) to [bend left=20](v9);

\draw[bedge](v6) to [bend left=20](v8);
\draw[bedge](v6) to [bend right=50](v16);
\draw[ledge](v6) to [bend right=10](v9);
\draw[redge](v6) to [bend right=5](v10);

\draw[bedge](v7) to [bend left=20](v11);
\draw[twoedge](v7) to [bend right=20](v16);
\draw[redge](v7) to [bend right=5](v12);

\draw[ledge](v8) to [bend right=10](v11);
\draw[bedge](v8) to [bend left=20](v16);
\draw[twoedge](v8) to [bend right=20](v12);

\draw[bedge](v9) to [bend left=20](v13);
\draw[redge](v9) to [bend right=5](v11);
\draw[redge](v9) to (v15);

\draw[bedge](v10) to [bend left=20](v12);
\draw[bedge](v10) to [bend left=25](v15);
\draw[ledge](v10) to [bend left=5](v13);

\draw[twoedge](v11) to [bend right=20](v14);
\draw[redge](v11) to [bend right=5](v15);

\draw[ledge](v12) to [bend right=10](v14);
\draw[bedge](v12) to [bend left=20](v15);

\draw[redge](v13) to [bend right=5](v14);
\draw[redge](v13) to [bend right=30](v16);

\draw[redge](v14) to [bend left=5](v16);

\draw[ledge](v15) to [bend right=10](v16);

\node[vertex](v17) at (-2.5,0) {$x$};
\node[vertex](v18) at (2.5,0) {$y$};
\node[vertex](v19) at (-5.5,0) {$x$};
\node[vertex](v20) at (-3.5,0) {$y$};
\node[vertex](v21) at (3.5,0) {$x$};
\node[vertex](v22) at (5.5,0) {$y$};
\node[vertex](v26) at (-.5,0) {$y$};
\node[vertex](v27) at (.5,0) {$x$};

\node[vertex](v23) at (-4.5,-.5) {$(1,0)$};
\draw[ledge](v19)--(v20);

\node[vertex](v24) at (-1.5,-.5) {$(0,1)$};
\draw[redge](v17)--(v26);

\node[vertex](v28) at (1.5,-.5) {$(1,1)$};
\draw[bedge](v27)--(v18);

\node[vertex](v25) at (4.5,-.5) {$(1,2)$};
\draw[twoedge](v21)--(v22);

\end{tikzpicture}
\end{figure}
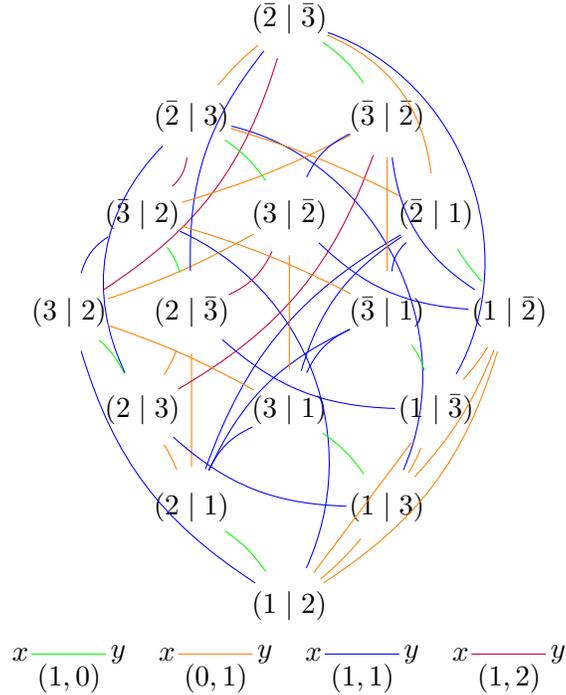

\section{Curve neighborhoods} \label{sec:crvnbd}
The main result of this section is Theorem \ref{main:crvnbd} which states all curve neighborhoods of Schubert varieties $\IF$. We will define a curve neighborhood in Definition \ref{def:defofcrvnbhd} and Proposition \ref{prop:moment-odd} will state the combinatorial equivalent version. Then Lemmas \ref{lem1} and \ref{lem2} are used to prove the main result in Theorem \ref{main:crvnbd}.

Let $X$ be a Fano variety. Let $d \in H_2(X,\Z)$ be an effective degree. Recall that the moduli space of genus $0$, degree $d$ stable maps with two marked points $\Mb_{0,2}(X,d)$ is endowed with two evaluation maps $\ev_i \colon \Mb_{0,2}(X,d) \to X$, $i=1,2$ which evaluate stable maps at the $i$-th marked point.

\begin{defn} \label{def:defofcrvnbhd}
Let $\Omega \subset X$ be a closed subvariety. The \emph{curve neighborhood} of $\Omega$ is the subscheme 
\[ 
    \Gamma_d(\Omega) := \ev_2( \ev_1^{-1} \Omega) \subset X
\] 
endowed with the reduced scheme structure.
\end{defn}

The next proposition gives a combinatorial formulation of curve neighborhoods in terms of the moment graph. See Figure \ref{crvnbdexample} for an example of an application of Proposition \ref{prop:moment-odd}.

\begin{prop}[\cite{buch.m:nbhds}]\label{prop:moment-odd}
Let $w \in W^P \cap \Wodd$. In the moment graph of $X=\IF$, let $\{v^1, \cdots, v^s\}$ be the maximal vertices (for the Bruhat order) which can be reached from any $u \leq w$ using a chain of degree $d$ or less. Then $\Gamma_d(X(w))=X(v^1) \cup \cdots \cup X(v^s)$.
\end{prop}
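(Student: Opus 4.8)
The plan is to reduce the statement to the known combinatorial description of curve neighborhoods in the even symplectic partial flag manifold $\Xev=\IF(1,2;\tilde{E})$, exploiting the fact (Proposition \ref{prop:schubert}) that $\IF$ is itself a smooth Schubert variety $\Xev(w_0\Wlarge_P)$ and that its $B$-orbits are precisely the Schubert cells $\Xev(w)^\circ$ with $w\leq w_0$. First I would recall the general moment-graph formula for curve neighborhoods of Schubert varieties in a homogeneous space $G/P$ from \cite{buch.m:nbhds}: $\Gamma_d(X(w))$ is the union of Schubert varieties $X(v)$ where $v$ ranges over the Bruhat-maximal elements reachable from some $u\leq w$ by a chain of degree $\leq d$ in the moment graph. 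The content here is that this formula, a priori valid only in the homogeneous ambient space, descends correctly to the non-homogeneous variety $\IF$.

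The key steps, in order: (1) Observe that a degree $d$ rational curve in $\IF\subset\Xev$ is in particular a degree $d$ rational curve in $\Xev$, and conversely any $T$-stable curve in $\Xev$ with both endpoints in $\Wodd\cap W^P$ lies in $\IF$ since $\IF=\Xev(w_0)$ is $B$-stable (hence a union of Schubert cells, closed under the Bruhat order below $w_0$) and any point on a $T$-stable curve joining two fixed points $u,v\leq w_0$ lies in $\Xev(u)\cup\Xev(v)$ by the Bruhat decomposition; one must check the torus-stable curve itself stays inside $\IF$, which follows because $C_\alpha(w)\cong\bP^1$ has a $B$-orbit decomposition into the two fixed points and a one-dimensional cell, and all three pieces are Schubert cells $\leq$ the larger endpoint. (2) Apply the Borel–Mori/Buch–Mihalcea style argument: $\Gamma_d(X(w))$ is $B$-stable (since $\Omega=X(w)$ is $B$-stable and the evaluation maps are $G$-equivariant, hence $B$-equivariant), so it is a union of Schubert varieties; identify which ones by checking $T$-fixed points, i.e. an element $v$ lies in $\Gamma_d(X(w))$ iff there is a genus-$0$ degree-$\leq d$ stable map with one marked point over a fixed point $u\leq w$ and the other over $v$, and by a degeneration/specialization argument this is equivalent to the existence of a chain of degree $\leq d$ in the moment graph from such a $u$ to $v$. (3) Take closures: the irreducible components are exactly the $X(v^i)$ for $v^i$ the Bruhat-maximal such $v$, since $X(v)\subset X(v')$ whenever $v\leq v'$.

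The main obstacle I anticipate is step (1)–(2) glued together: justifying that the moment-graph criterion for $\Xev$ restricts cleanly to $\IF$ despite $\IF$ not being homogeneous — specifically, that no ``new'' degree-$d$ curves appear and none are lost when passing between $\IF$ and $\Xev$, and that the reduced-scheme/irreducible-component bookkeeping is unaffected. The cleanest route is probably to cite \cite{buch.m:nbhds} for the statement in $\Xev$ directly (their results apply to Schubert varieties in $G/P$, and $\Gamma_d^{\Xev}(\Xev(w))$ for $w\leq w_0$ has all its components of the form $\Xev(v)$ with $v\leq w_0$ automatically, because a maximal reachable $v$ from $u\leq w\leq w_0$ satisfies $v\leq w_0$ as $w_0$ is a ``sink-like'' maximal vertex here is \emph{not} automatic), so one actually does need the structural input that $\IF$ is a smooth Schubert variety together with the observation that curve neighborhoods of a Schubert variety $X(w)$ with $w\leq w_0$ are contained in $X(w_0)=\IF$. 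That containment $\Gamma_d^{\Xev}(X(w))\subseteq\IF$ for $w\leq w_0$ is the real crux: it should follow from \cite{buch.m:nbhds} combined with the moment graph of $\Xev$ together with the explicit description of $\Wodd$ as $\{w:w(j)\neq\bar 1\}$, by checking that chains starting below $w_0$ cannot escape $\Wodd\cap W^P$ — equivalently that $\Xev(w_0)$ absorbs its own curve neighborhoods, i.e. $\Gamma_d(\IF)=\IF$ inside $\Xev$ for all $d$, which is plausible since $\IF$ is Fano and this is a standard feature of ``big'' Schubert varieties but still requires a short verification using Lemma \ref{lem:3.2} and the shape of $w_0=(\bar2|\bar3)$.
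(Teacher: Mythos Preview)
Your overall strategy of reducing to the homogeneous ambient $\Xev$ is not the paper's, and the version you outline does not work. The crux you identify, namely $\Gamma_d^{\Xev}(X(w))\subseteq\IF$ for $w\leq w_0$ (equivalently $\Gamma_d^{\Xev}(\IF)=\IF$), is simply false: take the $T$-fixed point $(1\mid 2)\in\IF$ and follow the $T$-stable curve in $\Xev$ corresponding to the root $2t_1\in R^+_{(1,1)}$; the other endpoint is $(\bar 1\mid 2)\notin\Wodd$. Thus chains in the moment graph of $\Xev$ starting below $w_0$ can and do escape $\Wodd\cap W^P$, and the curve neighborhood computed in $\Xev$ is strictly larger than the one computed in $\IF$. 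Intersecting back with $\IF$ does not obviously recover $\Gamma_d^{\IF}(X(w))$ either, so the detour through $\Xev$ creates more problems than it solves.

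The paper's proof is entirely intrinsic to $\IF$ and much shorter. Both inclusions are argued directly: for $Z_{w,d}:=\bigcup_i X(v^i)\subseteq\Gamma_d(X(w))$, a chain of $T$-stable curves of total degree $\leq d$ from some $u\leq w$ to $v^i$ assembles into a degree $d$ stable map witnessing $v^i\in\Gamma_d(X(w))$, and $B$-stability then gives $X(v^i)\subseteq\Gamma_d(X(w))$. For the reverse inclusion, the paper invokes \cite{mare.mihalcea}*{Lemma 5.3}: any $T$-fixed point $v\in\Gamma_d(X(w))$ is joined to some $T$-fixed point $u\in X(w)$ by a $T$-stable curve of degree $\leq d$ (in $\IF$), hence by a chain in the moment graph of $\IF$; maximality of the $v^i$ then forces $v\leq v^i$ for some $i$. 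Your step (2), read intrinsically in $\IF$ rather than in $\Xev$, is essentially this argument; the ``degeneration/specialization'' you allude to is exactly what \cite{mare.mihalcea}*{Lemma 5.3} supplies. Drop the reduction to $\Xev$ and run step (2) directly in $\IF$.
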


\begin{proof}
Let $Z_{w,d}=X(v^1) \cup \cdots \cup X(v^s)$. Let $v:= v^i \in Z_{w,d}$ be one of the maximal $T$-fixed points. By the definition of $v$ and the moment graph there exists a chain of $T$-stable rational curves of degree less than or equal to $d$ joining $u \leq w$ to $v$. It follows that there exists a degree $d$ stable map joining $u \leq w$ to $v$. Therefore $v \in \Gamma_d(X(w))$, thus $X(v) \subset \Gamma_d(X(w))$, and finally $Z_{w,d} \subset \Gamma_d(X(w))$.

For the converse inclusion, let $v \in \Gamma_d(X(w))$ be a $T$-fixed point. By \cite{mare.mihalcea}*{Lemma 5.3} there exists a $T$-stable curve joining a fixed point $u \in X(w)$ to $v$. This curve corresponds to a path of degree $d$ or less from some $u \leq w$ to $v$ in the moment graph of $\IG(k,2n+1)$. By maximality of the $v^i$ it follows that $v \leq v^i$ for some $i$, hence $v  \in X(v^i) \subset Z_{w,d}$, which completes the proof.
\end{proof}

\begin{figure}
\caption{In this figure we calculate a few curve neighborhoods of the Schubert point $(1|2)$ for $n=2$ as an example of Proposition \ref{prop:moment-odd}.}
\label{crvnbdexample}
\begin{center}
\begin{tikzpicture}[scale=.5]

\tikzstyle{vertex} = [circle, minimum size = 2pt, inner sep = 0pt]
\tikzstyle{ledge} = [ green]
\tikzstyle{redge}= [ orange]
\tikzstyle{bedge} = [ blue]
\tikzstyle{twoedge} = [ purple]

\node[vertex](v1) at (0,1) {$(1 \mid 2)$};
\node[vertex](v2) at (-2,3) {$(2 \mid 1)$};
\node[vertex](v10) at (4.5,7) {$(1 \mid \bar{2})$};
\node[vertex](v11) at (-3,9) {$(\bar{3} \mid 2)$};
\node[vertex](v13) at (3,9) {$(\bar{2} \mid 1)$};

\draw[ledge](v1) to [bend right=10](v2);
\draw[bedge](v1) to [bend right=45](v11);
\draw[redge](v1) to [bend right=20](v10);









\draw[ledge](v10) to [bend left=5](v13);






\node[vertex](v17) at (-2.5,0) {$x$};
\node[vertex](v18) at (2.5,0) {$y$};
\node[vertex](v19) at (-5.5,0) {$x$};
\node[vertex](v20) at (-3.5,0) {$y$};
\node[vertex](v21) at (3.5,0) {$x$};
\node[vertex](v22) at (5.5,0) {$y$};
\node[vertex](v26) at (-.5,0) {$y$};
\node[vertex](v27) at (.5,0) {$x$};

\node[vertex](v23) at (-4.5,-.5) {$(1,0)$};
\draw[ledge](v19)--(v20);

\node[vertex](v24) at (-1.5,-.5) {$(0,1)$};
\draw[redge](v17)--(v26);

\node[vertex](v28) at (1.5,-.5) {$(1,1)$};
\draw[bedge](v27)--(v18);

\node[vertex](v25) at (4.5,-.5) {$(1,2)$};
\draw[twoedge](v21)--(v22);


\end{tikzpicture}

$\Gamma_{(1,0)}(X(1|2))=X(2|1)$ \hspace{10pt} $\Gamma_{(0,1)}(X(1|2))=X(1|\bar{2})$ \hspace{10pt} $\Gamma_{(1,1)}(X(1|2))=X(\bar{3}|2) \cup X(\bar{2}|1)$
\end{center}
\end{figure}
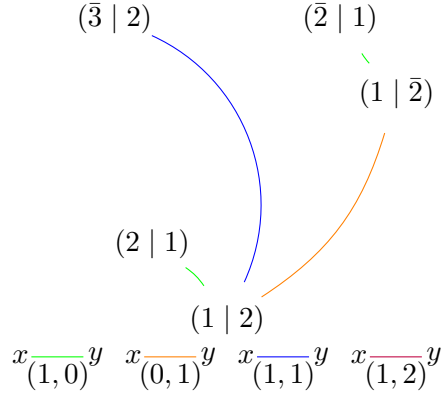

The next theorem is the main result of this section that states the precise calculations of curve neighborhoods of Schubert varieties in $\IF$. The result follows from Lemmas \ref{lem1} and \ref{lem2} stated below.

\begin{thm} \label{main:crvnbd}The following curve neighborhood calculations hold.
\begin{enumerate}
\item $\Gamma_{(d_1 \geq 1,0)}(X(a|b)) =
\begin{cases} 
    X(a|b);  a>b \\
    X(b|a);  a<b 
\end{cases}$
\vspace{12pt}
\item $\Gamma_{(0,d_2 \geq 1)}(X(a|b)) =
\begin{cases} 
    X(a|\bar{3});  a \in \{2,\bar{2} \} \\
    X(a|\bar{2});  a \notin \{2, \bar{2} \} 
\end{cases}$
\vspace{12pt}
\item $\Gamma_{(d_1 \geq 1,1)}(X(a|b)) = 
\begin{cases}
    X(\bar{3}|2) \cup X(\bar{2}|1); (a|b) \in \{(1|2),(2|1)\} \\
    X(\bar{2}|\max\{a,b\});  1 \leq a,b \leq \bar{3} \mbox{ and } (a|b)\notin \{(1|2),(2|1)\} \\
    X(\bar{2}|\bar{3});  \bar{2} \in \{a,b\}
\end{cases}$
\vspace{12pt}
\item $\Gamma_{(d_1 \geq 1,d_2 \geq 2)}(X(a|b))=X(\bar{2}|\bar{3})$
\end{enumerate}

\end{thm}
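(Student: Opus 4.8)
The plan is to prove Theorem \ref{main:crvnbd} by invoking Proposition \ref{prop:moment-odd}: for each $w = (a|b) \in W^P \cap \Wodd$ and each effective degree $d$, I must identify the maximal vertices (in Bruhat order) reachable from some $u \le w$ by a chain of degree $\le d$ in the moment graph of $\IF$. So the whole theorem reduces to a combinatorial reachability analysis in the moment graph described in Definition \ref{def:moment-graph-combinat} and Lemma \ref{lem:3.2}, organized by the four possible ``shapes'' of the degree $d$ listed in the four cases of the theorem. The paper signals that the real content is in Lemmas \ref{lem1} and \ref{lem2}, so my proposal is to establish those two lemmas and then assemble the four cases. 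Two preliminary observations will be used throughout: first, since $X(u) \subseteq X(w)$ whenever $u \le w$, reachability ``from some $u \le w$'' only ever improves as $w$ grows, so I may always assume we start from the top fixed point of $X(w)$ itself when convenient, and monotonicity of the answer in $w$ is automatic; second, the top vertex of the whole moment graph is $w_0 = (\bar 2 | \bar 3)$ (see Remark after \eqref{E:w0} and Proposition \ref{prop:schubert}), which explains why case (4) collapses to all of $\IF$.

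I would carry out the four cases in the order given, as they are roughly of increasing difficulty. Case (1), degree $(d_1 \ge 1, 0)$: by Lemma \ref{lem:3.2} a chain of degree $(d_1,0)$ uses only edges in $R^+_{(1,0)} = \{t_1-t_2\}$, i.e.\ the single reflection $s_1$ swapping the first two entries. Starting from $(a|b)$ (or any $u\le w$), the reachable vertices are $(a|b)$ and $(b|a)$, and one picks the Bruhat-maximal one — which is $(a|b)$ if $a>b$ and $(b|a)$ if $a<b$ — after checking that $(b|a)$ is still $\le w_0$, i.e.\ lies in $\Wodd$; this is where one uses the characterization $w(j)\ne\bar 1$. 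Case (2), degree $(0,d_2\ge 1)$: chains use only $R^+_{(0,1)} = \{t_2 \pm t_j : 3\le j\le n+1\}\cup\{2t_2\}$, reflections that move only the second entry $b$, cycling it (through the natural order $1<2<\cdots<n+1<\overline{n+1}<\cdots<\bar 1$) up to its maximal admissible value in $\Wodd$, which is $\bar 3$ unless the first entry $a$ already equals $2$ or $\bar 2$ (forcing an obstruction so that the best is $a|\bar 3$ anyway) — I will need to be careful about the interaction between the two entries since they must remain distinct and non-negatively-barred appropriately, and this bookkeeping is Lemma \ref{lem1} (or part of it).

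Case (3), degree $(d_1\ge 1, 1)$, is the genuinely interesting one and I expect it to be the main obstacle: this is exactly where reducible curve neighborhoods appear (Remark \ref{redcrv}), so the answer has \emph{two} maximal vertices for the special starting points $(1|2)$ and $(2|1)$. A degree $(d_1,1)$ chain, by Lemma \ref{lem:3.2}, is built from one edge of type $(0,1)$ together with $d_1-1$ edges of type $(1,0)$ (plus further type-$(1,0)$ edges), OR from one edge of type $(1,1)$ plus type-$(1,0)$ edges, OR — only if $d_1\ge 1$ — it cannot use a $(1,2)$ edge since that would force $d_2\ge 2$. So one must track, from each $(a|b)$, which vertices of the form $(\bar 2 | \ast)$ are reachable, and show that from the two Schubert points $(1|2),(2|1)$ one can reach \emph{both} $(\bar 3|2)$ and $(\bar 2|1)$ but neither dominates the other in Bruhat order (a direct check from the explicit cover relations), whereas from every other $(a|b)$ a single dominant vertex $X(\bar 2 | \max\{a,b\})$ — or $X(\bar 2|\bar 3)$ once $\bar 2$ is already present — absorbs everything. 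The careful part is verifying the Bruhat incomparability of $(\bar 3|2)$ and $(\bar 2|1)$ and confirming nothing larger is reachable at degree exactly $(d_1,1)$; I would do this by hand for $n=2$ from Figure \ref{IFn=2MG} and then argue the general $n$ reduces to the same local picture because the extra reflections $s_j$, $j\ge 3$, only permute ``small'' entries that are irrelevant once the first entry has been pushed to $\bar 2$ or $\bar 3$. Finally, case (4), degree $(d_1\ge 1, d_2\ge 2)$: here the budget is large enough to use a $(1,1)$ edge and a $(0,1)$ edge (or a $(1,2)$ edge), reaching the first entry up to $\bar 2$ and the second up to $\bar 3$, hence $\Gamma_d(X(a|b)) = X(\bar 2|\bar 3) = \IF$; this follows quickly once cases (2) and (3) are in place, by concatenating chains. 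Throughout, Lemma \ref{lem2} presumably packages the ``once you can reach $(\bar 2|\ast)$ with $\ast$ large you can reach $w_0$'' saturation statement, which I would prove by exhibiting explicit short chains.
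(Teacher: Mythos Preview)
Your proposal is correct and follows essentially the same approach as the paper: reduce to moment-graph reachability via Proposition \ref{prop:moment-odd}, analyze chains degree-by-degree using the edge classification in Definition \ref{def:moment-graph-combinat} and Lemma \ref{lem:3.2}, and for case (3) pin down the reducible neighborhood at $(1|2),(2|1)$ via the Bruhat incomparability of $(\bar 3|2)$ and $(\bar 2|1)$. Two small slips to correct: in case (2) your description is inverted (the second entry reaches $\bar 2$ generically and only drops to $\bar 3$ when $a\in\{2,\bar 2\}$, not the other way around), and Lemma \ref{lem2} is not a saturation statement but rather the full case-by-case computation of $\Gamma_{(1,1)}$ that you sketched for case (3).
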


\begin{proof}
 For case (1), if $a>b$ then $(a|b) \cdot s_1=(a|b)$ and if $a<b$ then $(a|b) \cdot s_1=(b|a).$ 
 
 For case (2) we will need to check four subcases. Notice that we are multiplying $(a|b)$ by reflections of the form $t_2\pm t_j$ for $3 \leq j \leq n+1$ or $2t_2$. In particular, $a$ remains fixed so $\Gamma_{(0,1)}(X(a|b))=\Gamma_{(0,d_2\geq 1)}(X(a|b))$. If $a \in \{2, \bar{2} \}$ and $b \notin \{3,\bar{3} \}$ then $(a|b) \cdot s_{t_2+t_3}=(a|\bar{3})$. If $a \in \{2, \bar{2} \}$ and $b \in \{3,\bar{3} \}$ then $(a|b) \cdot s_{2t_2}=(a|\bar{3})$. If $a \notin \{2, \bar{2} \}$ and $b \notin \{2,\bar{2} \}$ then $(a|b) \cdot s_{t_2+t_3}=(a|\bar{2})$. If $a \notin \{2, \bar{2} \}$ and $b \in \{2,\bar{2} \}$ then $(a|b) \cdot s_{2t_2}=(a|\bar{2})$. In each case, this is the vertex with the greatest length that can be reached by and edge of degree $(0,1)$ from $(a|b)$. 
 
 For case (3), first notice that $\Gamma_{(1,1)}(X(a|b))=\Gamma_{(d_1\geq 1,1)}(X(a|b))$ since the reflection $s_1$ only interchanges $(a|b)$ and there is not an edge in the moment graph of $\IF$ with degree $(2,1)$. Case (3) follows by Lemma \ref{lem2}. 
 
 For case (4), notice that $\Gamma_{(1,2)}(X(a|b))=\Gamma_{(d_1\geq 1, d_2 \geq 1)}(X(a|b))$. By case (3), we see that $\Gamma_{(1, 1)}(X(a|b))$ has a component of the form $X(\bar{2}|b)$. The result for case (4) follows by an application of case (2). The result follows.
\end{proof}

In the next lemma we state the possible chains in the moment graph of $\IF$ of degree less than or equal to $(1,1)$.

\begin{lemma} \label{lem1}
Let $(a|b) \in W^{odd}$. Then we have the following chains in the moment graph of $\IF.$
\begin{enumerate}
\item For chains traversing an edge of degree $(1,0)$ followed by an edge of degree $(0,1)$, we have the following:
\begin{eqnarray*}
(a|b) \overset{(1,0)}{\longrightarrow} (b|a) \overset{(0,1)}{\longrightarrow} (b| h)
\end{eqnarray*}
where $h \notin \{b, \bar{b} \}$.

\item For chains traversing an edge of degree $(0,1)$ followed by an edge of degree $(1,0)$ we have the following:
\begin{eqnarray*}
(a|b) \overset{(0,1)}{\longrightarrow} (a|h) \overset{(1,0)}{\longrightarrow} (h| a)
\end{eqnarray*}
where $h \notin \{a, \bar{a} \}$.
Notice that if $a=\bar{2}$ then permutation length decreases on the second step.

\item For chains traverse an edge of degree $(1,1)$ we have the following:
\begin{eqnarray*}
(a|b) \overset{(1,1)}{\longrightarrow} (h|b)
\end{eqnarray*}
where $h \notin \{b, \bar{b} \}$.
\end{enumerate}
\end{lemma}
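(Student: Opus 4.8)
The plan is to establish the three types of chains by direct computation with signed permutations, using the combinatorial description of the moment graph edges from Definition \ref{def:moment-graph-combinat} together with Lemma \ref{lem:3.2}. Recall that an element $(a \mid b) \in W^{odd}$ is determined by the pair $(w(1), w(2))$, and that multiplying on the right by the reflection $s_\alpha$ transposes the entries in positions that $\alpha$ involves. The key observation is that the edges of degree $(1,0)$ come only from $\alpha = t_1 - t_2$, the edges of degree $(0,1)$ come from roots $t_2 \pm t_j$ ($3 \le j \le n+1$) or $2t_2$, and the edges of degree $(1,1)$ come from $t_1 \pm t_j$ ($3 \le j \le n+1$) or $2t_1$; these are the only edges of total degree $\le (1,1)$ available (there is no edge of degree $(2,1)$ or $(1,1)$ arising any other way, and the degree-$(1,2)$ edge $t_1+t_2$ is excluded).

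First I would handle part (3): right multiplication by any of the degree-$(1,1)$ roots $t_1 \pm t_j$ or $2t_1$ affects only position $1$ (it swaps $w(1)$ with $w(j)$ or $\overline{w(j)}$, or negates $w(1)$), leaving $w(2) = b$ fixed. Since $w(1)$ gets replaced by some value $h$ drawn from among $\pm w(j)$ for $j \ge 3$, and since $(a \mid b)$ is a valid element of $W^{odd} \cap W^P$, the resulting value $h$ must be an admissible first coordinate; the constraint that the pair $(h \mid b)$ remain a legal $W^P$-representative forces $h \notin \{b, \bar b\}$ (we cannot have $w(1) = w(2)$ up to bar). This gives the chain $(a \mid b) \overset{(1,1)}{\to} (h \mid b)$. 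Part (1) then follows by composing: the degree-$(1,0)$ edge $s_1$ swaps the two coordinates, sending $(a \mid b) \to (b \mid a)$; then a degree-$(0,1)$ edge, coming from $t_2 \pm t_j$ or $2t_2$, acts only on the second coordinate, replacing $a$ by some $h \notin \{a, \bar a\}$, yielding $(b \mid a) \overset{(0,1)}{\to} (b \mid h)$ — but here one relabels, noting the second coordinate was $a$, so the constraint is $h \notin \{b,\bar b\}$ when written in the form of the lemma; I would double-check the exact bookkeeping of which entry is constrained. Part (2) is symmetric: first the degree-$(0,1)$ edge acts on position $2$, sending $(a\mid b) \to (a \mid h)$ with $h \notin \{a,\bar a\}$, then the degree-$(1,0)$ edge $s_1$ swaps to give $(h \mid a)$; the remark about length decreasing when $a = \bar 2$ follows since $\bar 2$ is the maximal admissible first coordinate (cf. the description of $w_0 = (\bar 2 \mid \bar 3)$), so moving $\bar 2$ into position $2$ and bringing a smaller value forward decreases $\ell$.

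The main obstacle I anticipate is the careful verification of the admissibility constraints: which pairs $(a \mid b)$ actually lie in $W^{odd} \cap W^P$ (recall $W^{odd}$ requires $w(j) \neq \bar 1$ for all $j$, and $W^P$ requires $w(3) < \cdots < w(n) \le n+1$), and confirming that after each reflection the resulting pair still determines a vertex of the moment graph of $\IF$ rather than falling outside it — in particular that the target $h$ in each case ranges over exactly the claimed set and not a smaller one. A subtle point is that not every root in $R^+_{(0,1)}$ or $R^+_{(1,1)}$ produces a distinct admissible target, and one must argue that the value $h$ can be made to equal any element of $\{1,\dots,n+1,\overline{n+1},\dots,\overline{2}\} \setminus \{b,\bar b\}$ (respectively $\setminus\{a,\bar a\}$) by an appropriate choice of root; this is where the interplay between $t_2 - t_j$, $t_2 + t_j$, and $2t_2$ matters, and I would organize it as a short case analysis on whether $h$ is barred or unbarred and whether $|h| = 2$. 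Beyond this, the proof is a routine unwinding of definitions, so the bulk of the work is presenting the case bookkeeping cleanly rather than any genuine difficulty.
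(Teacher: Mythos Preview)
Your approach is essentially the same as the paper's: both proceed by direct computation with signed permutations, using the classification of roots by degree from Definition~\ref{def:moment-graph-combinat} and invoking Lemma~\ref{lem:3.2} at each step. One remark: the lemma only asserts the \emph{form} of the endpoint of such a chain (i.e.\ that the result is some $(b\mid h)$ with $h\notin\{b,\bar b\}$, etc.), not that every such $h$ is realized, so your anticipated case analysis on the full range of $h$ is unnecessary --- the paper's proof is accordingly just a three-line unwinding of definitions.
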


\begin{proof} We will prove each case. For case (1), following an edge of degree $(1,0)$ from $(a|b)$ results in $(b|a)$ by Lemma \ref{lem:3.2}. Then following an edge of degree $(0,1)$ from $(b|a)$ results in $(b|h)$ where $h \notin \{b, \bar{b} \}$ by Lemma \ref{lem:3.2}. 

For case (2), following an edge of degree $(0,1)$ from $(a|b)$ results in $(a|h)$ where $h \notin \{a, \bar{a}\}$ by Lemma \ref{lem:3.2}. Then following an edge of degree $(1,0)$ from $(a|h)$ results in $(h|a)$ by Lemma \ref{lem:3.2}. 

For case (3), following an edge of degree $(1,1)$ from $(a|b)$ results in $(h|b)$ where $h \notin \{b, \bar{b}\}$ by Lemma \ref{lem:3.2}. The result follows.
\end{proof}

In this lemma, we calculate the curve neighborhood $\Gamma_{(d_1 \geq 1,1)}(X(a|b))$ for any $(a|b) \in W^P$.

\begin{lemma} \label{lem2} First note the $\Gamma_{(d_1 \geq 1,1)}(X(a|b))=\Gamma_{(1,1)}(X(a|b))$ for any $(a|b) \in W^{odd}$ since right multiplication by $s_1$ on $w \in W$ interchanges $w(1)$ and $w(2)$. The curve neighborhood $\Gamma_{(1,1)}(X(a|b))$ is given by one of the following.
\begin{enumerate}
\item Suppose $a\notin \{2,\bar{2} \}$ and $b\notin \{2,\bar{2}\}$. 
\begin{enumerate}
\item If $a<b$ then

\[\Gamma_{(1,1)}(X(a|b))=X(\bar{2}|b). \]

\item If $a>b$ then

\[\Gamma_{(1,1)}(X(a|b))=X(\bar{2}|a). \]
\end{enumerate}

\item Suppose $a\notin \{2,\bar{2}\}$ and $b \in \{2,\bar{2}\}$.

\begin{enumerate}
\item If $b=\bar{2}$ then
\[\Gamma_{(1,1)}(X(a|b))=X(\bar{2}|\bar{3}). \]

\item If $a>2$ and $b=2$ then
\[\Gamma_{(1,1)}(X(a|b))=X(\bar{2}|a). \]

\item If $a=1$ and $b=2$ then

\[\Gamma_{(1,1)}(X(1|2))=X(\bar{3}|2)\cup X(\bar{2}|1). \]
\end{enumerate}

\item Suppose $a \in \{2,\bar{2}\}$ and $b \notin \{2,\bar{2}\}$.
\begin{enumerate}

\item If $a=\bar{2}$ then
\[\Gamma_{(1,1)}(X(a|b))=X(\bar{2}|\bar{3}). \]

\item If $a=2$ and $b>2$ then
\[\Gamma_{(1,1)}(X(a|b))=X(\bar{2}|b). \]

\item If $a=2$ and $b=1$ then
\[\Gamma_{(1,1)}(X(2|1))=X(\bar{3}|2)\cup X(\bar{2}|1). \]
\end{enumerate}

\end{enumerate}
\end{lemma}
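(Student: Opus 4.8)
The plan is to compute $\Gamma_{(1,1)}(X(a|b))$ directly from Proposition \ref{prop:moment-odd} by finding, for each position of $(a|b)$, the Bruhat-maximal vertices reachable from some $u \leq (a|b)$ via a chain of degree $\leq (1,1)$. Since $\Gamma_{(d_1 \geq 1,1)} = \Gamma_{(1,1)}$ (the $s_1$ reflection only swaps $w(1),w(2)$, and there is no edge of degree $(2,1)$), it suffices to handle degree exactly $(1,1)$ chains together with sub-chains of degree $(1,0)$, $(0,1)$, or $(0,0)$. First I would invoke Lemma \ref{lem1}, which enumerates the three shapes of chains of degree $\leq (1,1)$ starting from a fixed vertex: the $(1,0)$-then-$(0,1)$ chain ending at $(b|h)$ with $h \notin \{b,\bar b\}$; the $(0,1)$-then-$(1,0)$ chain ending at $(h|a)$ with $h \notin \{a,\bar a\}$; and the single $(1,1)$-edge chain ending at $(h|b)$ with $h \notin \{b,\bar b\}$. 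From these three endpoint families I would take the Bruhat-maximal elements, which, because of the ordering $1 < 2 < \cdots < n+1 < \overline{n+1} < \cdots < \bar 2$ on the first two coordinates (recall $\bar 1$ never occurs in $W^{odd}$), are obtained by pushing the free label $h$ up to $\bar 2$ whenever that is permitted.

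Next I would organize the case analysis exactly as in the statement, by whether each of $a,b$ lies in $\{2,\bar 2\}$. The key friction point is the constraint $h \notin \{b,\bar b\}$ (resp. $h \notin \{a,\bar a\}$): the label $h = \bar 2$ is reachable unless the "obstructing" coordinate is itself $2$ or $\bar 2$.

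\emph{Case (1):} $a,b \notin \{2,\bar 2\}$. The $(1,1)$-chain $(a|b) \to (h|b)$ lets $h$ reach $\bar 2$ (since $b \notin \{2,\bar 2\}$), giving the vertex $(\bar 2|b)$. The competing chains ending in $(b|h)$ or $(h|a)$ produce vertices $(\bar b | \ast)$ or $(\bar a|\ast)$-type endpoints; I would check via the Bruhat order that $(\bar 2|b)$ dominates these when $a<b$, and that when $a>b$ one first needs a $(1,0)$-edge $u \leq (a|b)$ down to $(b|a)$ and then the $(1,1)$-analysis relative to $(b|a)$ — but the cleanest route is to note that $u=(b|a) \leq (a|b)$ when $a>b$, so the reachable set includes everything reachable from $(b|a)$, making the answer $X(\bar 2|a)$. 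I would make this "first swap with $s_1$, then apply the $h\to\bar 2$ rule" reduction explicit, since it is what drives parts (1b), (2b), (3b).

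\emph{Cases (2) and (3):} exactly one of $a,b$ is in $\{2,\bar 2\}$. When the obstructing coordinate equals $\bar 2$, the target $(\bar 2|\bar 3)$ appears: if $b=\bar 2$, then $h \notin \{2,\bar 2\}$ forces $h = \bar 3$ as the largest admissible label in the $(h|b)$ or $(b|h)$ families, and similarly one checks $(\bar 2|\bar 3)$ is maximal; the case $a = \bar 2$ is symmetric via the initial $s_1$-swap. When the obstructing coordinate equals $2$ (with the other coordinate $>2$, i.e. $(a|2)$ with $a>2$, or $(2|b)$ with $b>2$), the label $\bar 2$ is still reachable in the *other* slot, yielding $X(\bar 2|a)$ resp. $X(\bar 2|b)$. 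The genuinely exceptional sub-cases are $(1|2)$ and $(2|1)$: here $\{a,b\}=\{1,2\}$, so in the $(h|b)$ family with $b=2$ we get at best $h=\bar 3$ (endpoint $(\bar 3|2)$), while in the $(h|a)$ or $(b|h)$ family with the "$1$" as obstruction we get $h = \bar 2$ (endpoint $(\bar 2|1)$); neither vertex dominates the other in Bruhat order, so $\Gamma_{(1,1)} = X(\bar 3|2) \cup X(\bar 2|1)$ is genuinely reducible. I expect verifying this incomparability — that $(\bar 3|2) \not\leq (\bar 2|1)$ and $(\bar 2|1) \not\leq (\bar 3|2)$ in $W^P$, and that no degree-$\leq(1,1)$ chain reaches a common upper bound — to be the main obstacle, as it requires a careful look at the Bruhat order on $W^P \cap W^{odd}$ rather than just tracking the two free labels; the moment graph picture in Figure \ref{IFn=2MG} is the guide, but the argument must be made uniformly in $n$.
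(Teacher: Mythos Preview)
Your proposal is correct and follows the same route as the paper: invoke Lemma~\ref{lem1} to list the three endpoint families $(b|h)$, $(h|a)$, $(h|b)$, push the free label $h$ to $\bar 2$ whenever the obstruction $h\notin\{x,\bar x\}$ allows it, and record that $(\bar 3|2)$ and $(\bar 2|1)$ are Bruhat-incomparable to handle the exceptional cases $(1|2),(2|1)$. The paper's own proof is just these two ingredients stated in one sentence; your write-up simply fleshes out the case analysis that the paper leaves implicit. (One small slip: in Case~(1) the competing $(b|h)$ family yields $(b|\bar 2)$, not ``$(\bar b|\ast)$''---but this does not affect the argument, since $(b|\bar 2)\leq(\bar 2|b)$ when $b<\bar 2$.)
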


\begin{proof}
The result follows from Lemma \ref{lem1} and the observation that the pair $\{ (\bar{3}|2)$, $(\bar{2}|1)\}$ is incomparable in the Bruhat order. The result follows.
\end{proof}

\section{Lattices} \label{sec:lattices}
Let $X$ be a Fano variety containing the subvariety $\Omega$. It is interesting to ask if the set $\{ \Gamma_{d}(\Omega))\}_{d}$ forms a (distributive) lattice where $\leq$ is defined by inclusion of varieties. We will show that  \[\{ \Gamma_{(d_1,d_2)}(X(a|b))\}_{(d_1,d_2) \geq (0,0)}\] is a distributive lattice for any $(a|b) \in W^{odd}$. We will review the definition of a (distributive) lattice and two particular lattices next.

\begin{defn}We will define lattices, distributive lattices, and two useful lattices.
\begin{enumerate}
\item A partially ordered set $(L, \leq)$ is a lattice if the following two conditions hold.
\begin{enumerate}
\item if for any $a,b \in L$ there is a unique element denoted by $a\vee b \in L$ such that 
\begin{itemize}
\item $a \leq a\vee b$ and $b \leq a\vee b$
\item and if there is a $c \in L$ such that $a \leq c$ and $b \leq c$ then $a \vee b \leq c$.
\end{itemize}
\item if for any $a,b \in L$ there is a unique element denoted by $a \wedge b \in L$ such that 
\begin{itemize}
\item $a \geq a\vee b$ and $b \geq a\vee b$
\item and if there is a $c \in L$ such that $a \geq c$ and $b \geq c$ then $a \wedge b \geq c$.
\end{itemize}
\end{enumerate}

\item A Lattice $(L,\leq)$ is a distributive lattice if \[ a \vee (b \wedge c)=(a \vee b) \wedge(a \vee c)\] for any $a,b,c \in L$.

\item Define the lattice $M_3$ and $N_5$ as follows:
\begin{multicols}{2}
\begin{enumerate}
\item $M_3=\{0,a,b,c,1 \}$ with $\leq$ defined by 
\begin{itemize}
\item $0 < a,b,c$, 
\item $a,b,c<1$, and 
\item $a,b,$ and $c$ are incomparable.
\end{itemize}

\columnbreak
\item $N_5=\{0,a,b,c,1\}$ with $\leq$ defined by
\begin{itemize}
    \item $0<c<a<1$
    \item $0<b<1$
    \item $b$ is incomparable with both $a$ and $c$.
\end{itemize}
\end{enumerate}
\end{multicols}
See Figure \ref{M3N5}.
\end{enumerate}
\end{defn}
    

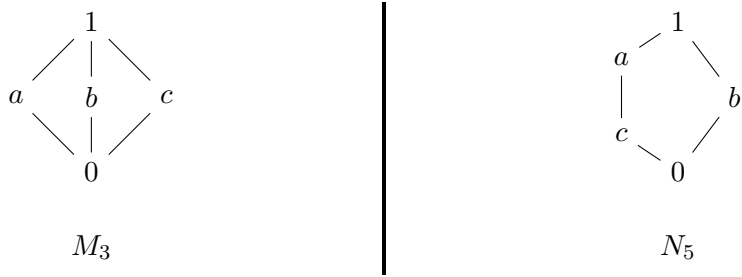
\begin{figure}
\caption{$M_3$ and $N_5$.}
\label{M3N5}
\begin{multicols*}{2}
\begin{tikzpicture}
    \node(v1) at (0,0) {$0$};
    \node(v2) at (-1,1) {$a$};
    \node(v3) at (0,1) {$b$};
    \node(v4) at (1,1) {$c$};
    \node(v5) at (0,2) {$1$};
    \node(v6) at (0,-1) {$M_3$};
    \draw(v1) to (v2);
    \draw(v1) to (v3);
    \draw(v1) to (v4);
    \draw(v2) to (v5);
    \draw(v3) to (v5);
    \draw(v4) to (v5);
\end{tikzpicture}

\columnbreak

\begin{tikzpicture}
    \node(v1) at (0,0) {$0$};
    \node(v2) at (.75,1) {$b$};
    \node(v3) at (-.75,0.5) {$c$};
    \node(v4) at (-.75,1.5) {$a$};
    \node(v5) at (0,2) {$1$};
    \node(v6) at (0, -1) {$N_5$};
    \draw(v1) to (v2);
    \draw(v1) to (v3);
    \draw(v3) to (v4);
    \draw(v2) to (v5);
    \draw(v4) to (v5);
\end{tikzpicture}
\end{multicols*}
\end{figure}

The next lemma gives a precise description of when a lattice is distributive.

\begin{lemma}\cite[Chapter 2, Theorem 1]{latticebook}
A lattice is distributive if and only if it does not have a sub-lattice isomorphic to $M_3$ nor $N_5$.
\end{lemma}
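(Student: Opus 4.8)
The plan is to prove both implications, handling the harder direction by contrapositive. The forward implication (distributive implies no sublattice isomorphic to $M_3$ or $N_5$) is immediate from two observations. First, the defining identity of distributivity is a universally quantified lattice identity, hence inherited by every sublattice. Second, neither $M_3$ nor $N_5$ is itself distributive, which I would verify by direct computation in each five-element lattice: in $M_3$ we have $a \vee (b \wedge c) = a \vee 0 = a$ while $(a \vee b) \wedge (a \vee c) = 1 \wedge 1 = 1$; and in $N_5$ (with $0 < c < a < 1$ and $b$ incomparable to $a$ and $c$) we have $a \wedge (b \vee c) = a \wedge 1 = a$ while $(a \wedge b) \vee (a \wedge c) = 0 \vee c = c$. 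Since $a \neq 1$ and $a \neq c$ respectively, both fail distributivity, so a distributive lattice can contain neither as a sublattice.

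For the converse I assume $L$ is not distributive and exhibit a copy of $M_3$ or $N_5$, splitting according to whether $L$ is modular. If $L$ fails the modular law, there exist $a \leq c$ and $b$ for which the always-valid modular inequality is strict, $a \vee (b \wedge c) < (a \vee b) \wedge c$. I would then examine the five elements $b \wedge c$, $u := a \vee (b \wedge c)$, $v := (a \vee b) \wedge c$, $b$, and $a \vee b$. Short manipulations give $b \wedge u = b \wedge v = b \wedge c$ and $b \vee u = b \vee v = a \vee b$, which forces the chain $b \wedge c < u < v < a \vee b$ together with the incomparable element $b$ to be a sublattice isomorphic to $N_5$; the strictness of every inequality (hence distinctness of the five points) follows from the strict modular inequality via a few collapsing arguments.

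If instead $L$ is modular but not distributive, I choose $a,b,c$ witnessing the failure of distributivity and form the median elements
\[
 d = (a \wedge b)\vee(b \wedge c)\vee(c \wedge a), \qquad e = (a \vee b)\wedge(b \vee c)\wedge(c \vee a),
\]
together with $a' = (a \wedge e)\vee d$, $b' = (b \wedge e)\vee d$, and $c' = (c \wedge e)\vee d$. Using the modular law and absorption repeatedly I would verify that $d \leq a',b',c' \leq e$, that the three pairwise joins all equal $e$ and the three pairwise meets all equal $d$, and that $a',b',c'$ are pairwise distinct and strictly between $d$ and $e$. These relations say precisely that $\{d,a',b',c',e\}$ is a sublattice isomorphic to $M_3$, completing the contrapositive.

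I expect this last construction to be the main obstacle. The collapse of all three pairwise joins to $e$ and all three pairwise meets to $d$ is not formal: each identity requires a careful chain of applications of the modular law, and one must separately confirm the elements do not degenerate — indeed, if $a'=b'$ (say) then one recovers a distributive instance, contradicting the choice of $a,b,c$, so it is exactly non-distributivity that is used here. Organizing these computations to exploit the cyclic symmetry among $a,b,c$ rather than repeating each verification three times, and pinning down the non-degeneracy cleanly, is the delicate part; by contrast the $N_5$ construction in the non-modular case is a short, self-contained calculation.
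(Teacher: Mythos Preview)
Your outline is the standard Birkhoff argument and is correct in substance; the forward direction is immediate as you say, and for the converse the split into the non-modular case (produce $N_5$) and the modular-but-not-distributive case (produce $M_3$ via the median construction) is exactly the classical route. The computations you flag as delicate in the $M_3$ case are genuinely where the work lies, and exploiting the cyclic symmetry is the right organizational move.

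That said, the paper does not prove this lemma at all: it is stated with a citation to \cite[Chapter 2, Theorem 1]{latticebook} and used as a black box. So there is no ``paper's own proof'' to compare against; you have supplied a proof where the authors simply invoke the literature. Your argument is more than what the paper requires, but nothing in it is wrong.
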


We are ready to state the main result of the section.

\begin{thm}
The set $L=\{ \Gamma_{(d_1,d_2)}(X(a|b))\}_{(d_1,d_2) \geq (0,0)}$ where $\leq$ is defined by inclusions of varieties is a distributive lattice.
\end{thm}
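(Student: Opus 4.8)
The plan is to exploit the forbidden-sublattice criterion for distributivity ($M_3$ and $N_5$) recalled just above, after showing that $L$ is a finite lattice with a very rigid order structure. Fix $(a|b)\in W^{odd}$ and write $\Gamma_d=\Gamma_d(X(a|b))$ for $d\in\Z_{\geq 0}^2$. The first step is to note that $L$ is small. Using Theorem~\ref{main:crvnbd} together with $\Gamma_{(0,0)}=X(a|b)$ (degree-$0$ stable maps are constant; alternatively apply Proposition~\ref{prop:moment-odd} with trivial chains), the variety $\Gamma_{(d_1,d_2)}$ depends only on the pair $(\min\{d_1,1\},\min\{d_2,2\})$, and when $d_1=0$ it is independent of $d_2\geq 1$. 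Hence $L=\{\Gamma_{(0,0)},\Gamma_{(1,0)},\Gamma_{(0,1)},\Gamma_{(1,1)},\Gamma_{(1,2)}\}$ has at most five elements.

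The second step is monotonicity: if $d\leq d'$ componentwise then $\Gamma_d\subseteq\Gamma_{d'}$, because a chain of degree $\leq d$ in the moment graph of $\IF$ is also a chain of degree $\leq d'$, so Proposition~\ref{prop:moment-odd} applies. The five index pairs above form a sub-poset of $\Z_{\geq 0}^2$ whose only incomparable pair is $\{(1,0),(0,1)\}$; transporting this through monotonicity shows that $\Gamma_{(0,0)}$ is the minimum of $L$, $\Gamma_{(1,2)}=X(\bar2|\bar3)$ is the maximum, and every pair of elements of $L$ is comparable except possibly $\{\Gamma_{(1,0)},\Gamma_{(0,1)}\}$. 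This already makes $L$ a lattice: for that lone pair, the upper bounds inside $L$ are $\Gamma_{(1,1)}$ and $\Gamma_{(1,2)}$, so the join is $\Gamma_{(1,1)}$; the only lower bound is $\Gamma_{(0,0)}$, so the meet is $\Gamma_{(0,0)}$; and every other join and meet is forced by the least element, the greatest element, and the chain relations. Distributivity is then formal: a sublattice of $L$ carries the induced order, hence has at most one incomparable pair, whereas $M_3$ has three and $N_5$ has two, so neither embeds, and the recalled criterion applies.

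I expect the only genuine subtlety to be the case $(a|b)\in\{(1|2),(2|1)\}$, where $\Gamma_{(1,1)}$ is the \emph{reducible} variety $X(\bar3|2)\cup X(\bar2|1)$; one must confirm that this does not introduce extra incomparable pairs. It does not, precisely because $L$ contains only this union as a single element, not its two components (which are the incomparable ones, cf.\ Lemma~\ref{lem2}), so the monotonicity bookkeeping of the second step still determines every order relation. The rest is routine: checking that the enumeration of the at-most-five elements is exhaustive over the case split of Theorem~\ref{main:crvnbd}, and recording the degenerate situations in which several of the five elements coincide (so that $L$ is in fact a chain).
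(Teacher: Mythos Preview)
Your proof is correct and uses the same key ingredients as the paper: the explicit curve-neighborhood computations of Theorem~\ref{main:crvnbd} to bound $|L|\le 5$, and the $M_3$/$N_5$ forbidden-sublattice criterion for distributivity. The paper's own argument is a direct case-by-case enumeration: Figure~\ref{lattice} displays the Hasse diagram of $L$ for each of six shapes of $(a|b)$ (plus the trivial case $(\bar2|\bar3)$), and one observes by inspection that none of these diagrams contains $M_3$ or $N_5$.

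Your route is slightly more conceptual: rather than drawing each lattice, you use monotonicity of $\Gamma_d$ in $d$ to deduce that the surjection from the five index pairs $\{(0,0),(1,0),(0,1),(1,1),(1,2)\}$ (with componentwise order) onto $L$ forces $L$ to have at most one incomparable pair, which immediately excludes $M_3$ (three incomparable pairs) and $N_5$ (two). This buys a uniform argument independent of which case $(a|b)$ falls into, and sidesteps having to verify separately that the diagrams in the figure are accurate. The paper's enumeration, on the other hand, gives the reader the exact lattice in each case, which is additional information not recoverable from your argument alone. Both are complete; yours is a mild streamlining.
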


\begin{proof}
Figure \ref{lattice} lists each lattice for each case except the trivial case $(a|b)=(\bar{2}|\bar{3})$. Furthermore, $(L, \leq)$ is distributive because neither $M_3$ nor $N_5$ is isomorphic to a sub-lattice of $L$.
\end{proof}

\begin{figure}\label{lattices}
\caption{The table contains the lattices $(\{ \Gamma_{(d_1,d_2)}(X(a|b))\}_{(d_1,d_2) \geq (0,0)}, \leq )$ in $\IF$ for each possible case except the trivial case when $(a|b)=(\bar{2}|\bar{3})$.}
\label{lattice}
\begin{table}[H]
\scalebox{.7} {\begin{tabular}{|c||c||c|}
\hline
\begin{tikzpicture} 
\node(v1) at (0,0) {$X(a \mid b)$}; 
\node(v2) at (0,2){$X(\bar{2} \mid \bar{3})$}; 
\draw (v1) to (v2); \end{tikzpicture} 
&
\begin{tikzpicture}
\node(v1) at (0,0) {$X(a \mid b)$};
\node(v2) at (0,2) {$\Gamma_{(0,1)}(X(a \mid b))$};
\node(v3) at (0,4) {$X(\bar{2} \mid \bar{3})$};
\draw (v1) to (v2);
\draw (v2) to (v3);
\end{tikzpicture}
&
\begin{tikzpicture}
\node(v1) at (0,0) {$X(a \mid b)$};
\node(v2) at (0,2) {$\Gamma_{(1,0)}(X(a \mid b))$};
\node(v3) at (0,4) {$X(\bar{2} \mid \bar{3})$};
\draw (v1) to (v2);
\draw (v2) to (v3);
\end{tikzpicture}
\\ \hline
If&
If &
If
\\
$a \in \{\bar{2}\}$
&
$a \in \{\bar{3}\}$
&
$b \in \{\bar{2}\}$
\\ 
or
&
and
&   
and
\\ 
$(a \mid b) = (\bar{3} \mid \bar{2})$.
&
$b \not\in \{\bar{2}\}.$
&
$a \not\in \{\bar{3}\}.$
\\ \hline \hline
\begin{tikzpicture}
\node(v1) at (0,0) {$X(a \mid b)$};
\node(v2) at (0,2) {$\Gamma_{(0,1)}(X(a \mid b))$};
\node(v3) at (0,4) {$\Gamma_{(1,1)}(X(a \mid b))$};
\node(v4) at (0,6) {$X(\bar{2} \mid \bar{3})$};
\draw (v1) to (v2);
\draw (v2) to (v3);
\draw (v3) to (v4);
\end{tikzpicture}
&
\begin{tikzpicture}
\node(v1) at (0,0) {$X(a \mid b)$};
\node(v2) at (1.5,2) {$\Gamma_{(1,0)}(X(a \mid b))$};
\node(v3) at (-1.5,2) {$\Gamma_{(0,1)}(X(a \mid b))$};
\node(v4) at (0,4) {$X(\bar{2} \mid \bar{3})$};

\draw (v1) to (v2);
\draw (v1) to (v3);
\draw (v2) to (v4);
\draw (v3) to (v4);
\end{tikzpicture}
&
\begin{tikzpicture}


\node(v1) at (0,0) {$X(a \mid b)$};
\node(v2) at (1.5,2) {$\Gamma_{(1,0)}(X(a \mid b))$};
\node(v3) at (-1.5,2) {$\Gamma_{(0,1)}(X(a \mid b))$};
\node(v4) at (0,4) {$\Gamma_{(1,1)}(X(a \mid b))$};
\node(v5) at (0,6) {$X(\bar{2} \mid \bar{3})$};

\draw (v1) to (v2);
\draw (v1) to (v3);
\draw (v2) to (v4);
\draw (v3) to (v4);
\draw (v4) to (v5);

\end{tikzpicture}\\ \hline
If &
If &
If
\\
$a \not\in \{\bar{2},\bar{3}\}$
&

$b \in \{\bar{3}\}$
& 

$a \not\in \{\bar{2},\bar{3}\}$
\\ 

and
&

and
&

and
\\ 

$b \not\in \{\bar{2},\bar{3}\}$
&

$a \not\in \{\bar{2}\}.$
&

$b \not\in \{\bar{2},\bar{3}\}$
\\ 

and
&


&

and
\\ 

$a>b.$
&


&

$a<b.$\\ 
\hline

\end{tabular}}
\end{table}

\end{figure}

\section{Combinatorial Property $\mathcal{O}$} \label{sec:combinatorial}
We begin by recalling the definitions of the combinatorial quantum Bruhat graph and Combinatorial Conjecuture $\mathcal{O}.$ Let $X$ be a Fano variety. Let $\mathcal{B}:=\{\alpha_i\}_{i \in I}$ denote a basis of the cohomology ring $H^*(X)$. Denote its first Chern class by \[ c_1(X)=a_1 \mbox{Div}_1+a_2 \mbox{Div}_2+\cdots+a_k \mbox{Div}_k\] where $\mbox{Div}_i \in \mathcal{B}$ is a divisor class for each $1 \leq i \leq k$.

\begin{defn} \label{CQBG}The combinatorial quantum Bruhat graph of $X$ is defined as follows. The vertices of this graph are the basis elements $\alpha_i \in H^*(X)$. The edge set is given as follows:
\begin{enumerate}
\item There is an oriented edge $\alpha_i \rightarrow \alpha_j$ if the class $\alpha_j$ appears with positive coefficient in the Chevalley multiplication $h \star \alpha_i$ for some hyperplane class $h$.
\item Let $\alpha_i=[X(i)]$ and $\alpha_j=[X(j)]$. There is an oriented edge $\alpha_i \rightarrow \alpha_j$ 

\begin{enumerate}
\item $X(j)$ is an irreducible component of $\Gamma_d(X(i))$ where $d=(d_1,\cdots,d_k)$,
\item and \[ \dim(X(j))-\dim(X(i))=a_1d_1+a_2d_2+\cdots+a_kd_k-1.\]
\end{enumerate}
\end{enumerate}
\end{defn}

Lemma \ref{lemma: propO} leads us to naturally consider the following combinatorial formulation of Conjecture $\mathcal{O}$.

\begin{defn}
    Combinatorial Property $\mathcal{O}$ holds if the combinatorial quantum Bruhat graph is strongly connected and the Greatest Common Divisor of the cycle lengths is $r=\mbox{GCD}(a_1,a_2,\cdots,a_k).$
\end{defn}

\subsection{Results for $\IF$} \label{CCO}

We begin this section by describing the combinatorial quantum Bruhat graph for $\IF$ which specializes Definition \ref{CQBG}.

\begin{prop}
    Let there be a vertex for each $w \in W^{odd}$. Let $u, v \in W^{odd}$. The combinatorial quantum Bruhat graph $\mathcal{G}_n$ can be created by:
    \begin{enumerate}
        \item There is an arrow $u \rightarrow v$ if $v \leq u$ and $\ell(v) = \ell(u) - 1$;
        
        \item Draw an arrow $u \overset{d}{\rightarrow} v$ if:
        \begin{itemize}
            \item $\Gamma_d(X(u))=X(v_1) \cup \cdots \cup X(v) \cup \cdots \cup X(v_s)$;
            \item $v \nleq v_i$ for any $1 \leq i \leq s$;
            \item $\ell(v)-\ell(u)=2d_1+(2n-1)d_2-1$.
        \end{itemize}
        
    \end{enumerate}
\end{prop}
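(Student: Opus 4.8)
The statement is an unpacking of Definition \ref{CQBG} in the special case $X=\IF$, where the two divisor classes carry coefficients $a_1=2$ and $a_2=2n-1$ (Lemmas 2.9--2.10), and the combinatorial ingredients---curve neighborhoods---are already computed in Theorem \ref{main:crvnbd}. So the plan is to verify that each of the two edge-generating rules in Definition \ref{CQBG} specializes exactly to the two rules stated in the proposition, using the structure of $\IF$ established in Sections 2--4.

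\textbf{Rule (1): the Chevalley edges.} First I would handle the edges coming from the classical Chevalley multiplication, i.e. the $q^0$ part of $h\star\alpha_i$. The key input is that for Schubert classes in a (smooth) flag-type variety, the classical Chevalley formula has the form $h\cdot[X(u)]=\sum c_{u,v}[X(v)]$ where the sum runs over $v$ with $\ell(v)=\ell(u)-1$ covered by $u$ in the Bruhat order, and the coefficients are positive. (This holds for $\IF$ since, by Proposition \ref{prop:schubert}, $\IF$ is a smooth Schubert variety in $\IF(1,2;\tilde E)$ and its Schubert cells are exactly the $B$-orbits; the restriction of the Chevalley formula to a smooth Schubert subvariety is again a positive combination of covered Schubert classes.) Hence $\alpha_u\to\alpha_v$ via Definition \ref{CQBG}(1) if and only if $v\leq u$ and $\ell(v)=\ell(u)-1$, which is precisely rule (1) of the proposition. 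I would phrase this carefully since the ``for some hyperplane class $h$'' clause only matters if $\IF$ has more than one hyperplane class; here $\dim H^2(\IF)=2$, and it suffices that at least one effective combination of the two divisor classes realizes every cover, which follows from positivity.

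\textbf{Rule (2): the quantum/curve-neighborhood edges.} Next I would translate Definition \ref{CQBG}(2). By Theorem \ref{main:crvnbd}, $\Gamma_d(X(u))$ is a union $X(v_1)\cup\cdots\cup X(v_s)$ of Schubert varieties indexed by the maximal vertices reachable by a degree-$\leq d$ chain (Proposition \ref{prop:moment-odd}). The condition that $X(v)$ be an \emph{irreducible component} is exactly that $v\in\{v_1,\dots,v_s\}$ and $v$ is not strictly below any other $v_i$, i.e. $v\nleq v_i$ for $i$ with $v_i\neq v$ (equivalently, since these are the maximal reachable vertices, $v\nleq v_i$ for all $i$ once we agree $v$ itself is one of them); this matches the second bullet. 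The dimension condition $\dim X(v)-\dim X(u)=a_1d_1+a_2d_2-1$ becomes $\ell(v)-\ell(u)=2d_1+(2n-1)d_2-1$, the third bullet. So rule (2) of the proposition is a verbatim specialization.

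\textbf{Main obstacle.} The only genuinely delicate point is the wording of the incomparability condition ``$v\nleq v_i$ for any $1\leq i\leq s$,'' which as literally stated would exclude $v$ (since $v\leq v$); the intended reading is that among the listed maximal vertices, $X(v)$ is one of them and is not contained in another, and I would state this precisely---perhaps by indexing so that $v\in\{v_1,\dots,v_s\}$ and requiring $v\neq v_i\implies v\nleq v_i$---to avoid ambiguity. Aside from this bookkeeping, the proof is a direct substitution of $a_1=2$, $a_2=2n-1$, and of the curve-neighborhood computation of Theorem \ref{main:crvnbd} into Definition \ref{CQBG}; the reducibility phenomenon (e.g. $\Gamma_{(1,1)}(X(1|2))=X(\bar 3|2)\cup X(\bar 2|1)$ from Theorem \ref{main:crvnbd}(3)) shows that the multi-target arrows in rule (2) are genuinely needed, which I would note as the conceptual content, referring forward to Remark \ref{redcrv}.
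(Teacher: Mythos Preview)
Your proposal is correct and in fact more detailed than the paper itself: the paper states this proposition without proof, treating it as a direct specialization of Definition \ref{CQBG} to $X=\IF$ with the coefficients $a_1=2$, $a_2=2n-1$ from the first Chern class lemma, which is exactly your approach. Your careful remark about the literal reading of the condition ``$v\nleq v_i$ for any $1\leq i\leq s$'' is a valid observation about the paper's wording, not a defect in your argument.
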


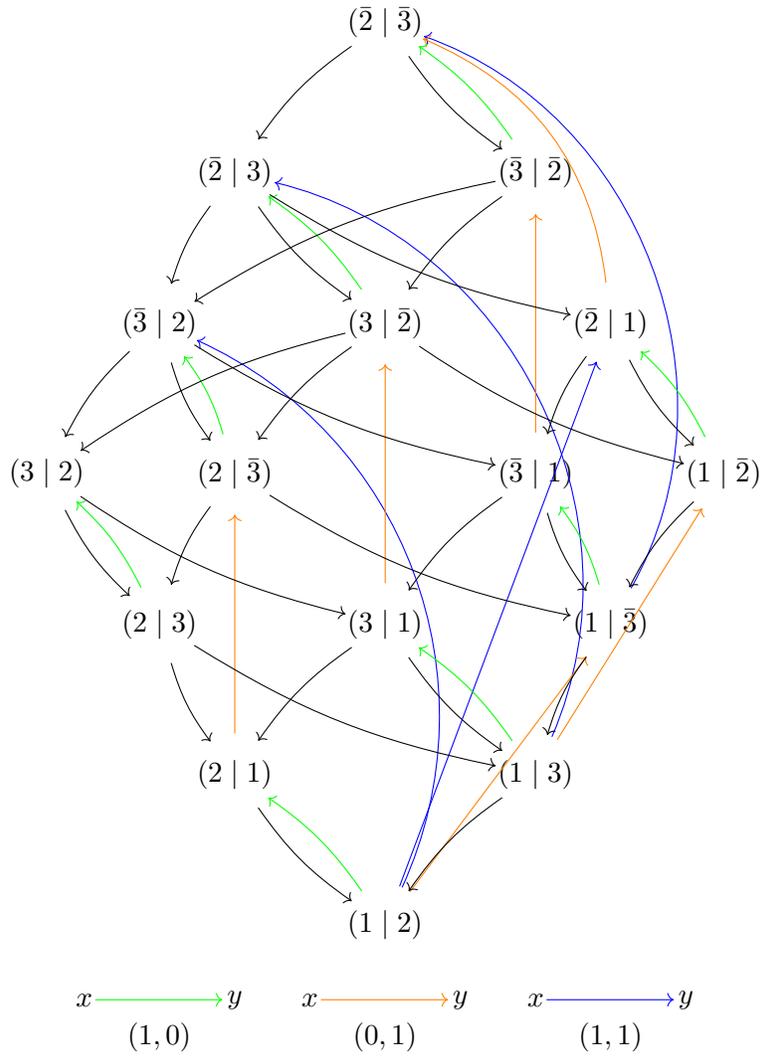
\begin{figure}[H]
\caption{Combinatorial quantum Bruhat graph of $\IF$ for $n = 2$. Notice that the edge joining $(1|2)$ and $(\bar{2}|1)$ is {\bf not} in the moment graph.}
\scalebox{1}{\begin{tikzpicture}

\tikzstyle{vertex} = [circle, minimum size = 2pt, inner sep = 0pt]
\tikzstyle{dedge} = [->, black]
\tikzstyle{uedge} = [->, green]
\tikzstyle{buedge}= [->, orange]
\tikzstyle{bbuedge} = [->, blue]

\node[vertex](v1) at (0,1) {$(1 \mid 2)$};
\node[vertex](v2) at (-2,3) {$(2 \mid 1)$};
\node[vertex](v3) at (2,3) {$(1 \mid 3)$};
\node[vertex](v4) at (-3,5) {$(2 \mid 3)$};
\node[vertex](v5) at (0,5) {$(3 \mid 1)$};
\node[vertex](v6) at (3,5) {$(1 \mid \bar{3})$};
\node[vertex](v7) at (-4.5,7) {$(3 \mid 2)$};
\node[vertex](v8) at (-2,7) {$(2 \mid \bar{3})$};
\node[vertex](v9) at (2,7) {$(\bar{3} \mid 1)$};
\node[vertex](v10) at (4.5,7) {$(1 \mid \bar{2})$};
\node[vertex](v11) at (-3,9) {$(\bar{3} \mid 2)$};
\node[vertex](v12) at (0,9) {$(3 \mid \bar{2})$};
\node[vertex](v13) at (3,9) {$(\bar{2} \mid 1)$};
\node[vertex](v14) at (-2,11) {$(\bar{2} \mid 3)$};
\node[vertex](v15) at (2,11) {$(\bar{3} \mid \bar{2})$};
\node[vertex](v16) at (0,13) {$(\bar{2}\mid \bar{3})$};

\node[vertex](v17) at (-1,0) {$x$};
\node[vertex](v18) at (1,0) {$y$};
\node[vertex](v19) at (-4,0) {$x$};
\node[vertex](v20) at (-2,0) {$y$};
\node[vertex](v21) at (2,0) {$x$};
\node[vertex](v22) at (4,0) {$y$};

\node[vertex](v23) at (-3,-.5) {$(1,0)$};
\draw[uedge](v19) to  (v20);

\node[vertex](v24) at (0,-.5) {$(0,1)$};
\draw[buedge](v17)--(v18);

\node[vertex](v25) at (3,-.5) {$(1,1)$};
\draw[bbuedge](v21)--(v22);

\draw[bbuedge](v1)to[bend right=45](v11);
\draw[bbuedge](v3)to[bend right=50](v14);
\draw[bbuedge](v6)to[bend right=50](v16);
\draw[bbuedge](v1)to (v13);

\draw[buedge](v1)to(v6);
\draw[buedge](v2)to(v8);
\draw[buedge](v3)to(v10);
\draw[buedge](v5)to(v12);
\draw[buedge](v9)to(v15);
\draw[buedge](v13)to[bend right=30](v16);

\draw[uedge](v1)to[bend right=10](v2);
\draw[uedge](v3)to[bend right=10](v5);
\draw[uedge](v4)to[bend right=10](v7);
\draw[uedge](v6)to[bend right=10](v9);
\draw[uedge](v10)to[bend right=10](v13);
\draw[uedge](v8)to[bend right=10](v11);
\draw[uedge](v12)to[bend right=10](v14);
\draw[uedge](v15)to[bend right=10](v16);

\draw[dedge](v2)to[bend right=10](v1);
\draw[dedge](v3)to[bend right=10](v1);
\draw[dedge](v4)to[bend right=10](v2);
\draw[dedge](v4)to[bend right=10](v3);
\draw[dedge](v5)to[bend right=10](v2);
\draw[dedge](v5)to[bend right=10](v3);
\draw[dedge](v6)to[bend right=10](v3);
\draw[dedge](v7)to[bend right=10](v4);
\draw[dedge](v8)to[bend right=10](v4);
\draw[dedge](v8)to[bend right=10](v6);
\draw[dedge](v9)to[bend right=10](v5);
\draw[dedge](v9)to[bend right=10](v6);
\draw[dedge](v10)to[bend right=10](v6);
\draw[dedge](v11)to[bend right=10](v7);
\draw[dedge](v11)to[bend right=10](v8);
\draw[dedge](v11)to[bend right=10](v9);
\draw[dedge](v12)to[bend right=10](v7);
\draw[dedge](v12)to[bend right=10](v8);
\draw[dedge](v12)to[bend right=10](v10);
\draw[dedge](v13)to[bend right=10](v9);
\draw[dedge](v13)to[bend right=10](v10);
\draw[dedge](v14)to[bend right=10](v11);
\draw[dedge](v14)to[bend right=10](v12);
\draw[dedge](v14)to[bend right=10](v13);
\draw[dedge](v15)to[bend right=10](v11);
\draw[dedge](v15)to[bend right=10](v12);
\draw[dedge](v16)to[bend right=10](v14);
\draw[dedge](v16)to[bend right=10](v15);

\draw[dedge](v7)to[bend right=10](v5);

\end{tikzpicture}
}
\end{figure}
\newpage
\begin{remark} \label{redcrv}
    Observe that the edge $(1|2) \overset{(1,1)}{\rightarrow}(\bar{2}|1)$ in the combinatorial quantum Bruhat graph is {\bf not} present in the moment graph. There is no known example of an edge appearing in the (geometric) quantum Bruhat graph that does not appear in the moment graph.
\end{remark}

We are ready to state and prove the main result of this section.

\begin{thm} \label{Omainresult}
    Combinatorial Property $\mathcal{O}$ holds for $\IF$.
\end{thm}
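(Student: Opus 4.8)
The plan is to verify the two conditions in the definition of Combinatorial Property $\mathcal{O}$ for $\IF = \IF(1,2;E)$: that the combinatorial quantum Bruhat graph $\mathcal{G}_n$ is strongly connected, and that the GCD of its cycle lengths equals $r = \mathrm{GCD}(2, 2n-1) = 1$. Since the Fano index is $1$, the second condition is essentially automatic once strong connectedness is established, because any cycle of length $k$ together with a cycle of length $k+1$ (or any two cycles of coprime lengths) forces the GCD to be $1$; so the real content is strong connectedness, and then exhibiting two cycles of coprime length. I would also note at the outset that the nonnegativity hypothesis of Lemma~\ref{lemma: propO} is met in the Schubert basis, since all structure constants appearing in the quantum Chevalley formula (and the curve-neighborhood edges) are nonnegative by construction of $\mathcal{G}_n$.

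First I would establish that from \emph{every} vertex $u = (a|b) \in W^{odd}$ there is a directed path to the maximal vertex $(\bar 2|\bar 3)$. This follows directly from Theorem~\ref{main:crvnbd}: combining a degree $(1,0)$ move, a degree $(0,1)$ move, and a degree $(1,1)$ move (each of which contributes a curve-neighborhood edge of the correct dimension drop, by the computation $\ell(v) - \ell(u) = 2d_1 + (2n-1)d_2 - 1$), every Schubert variety is eventually absorbed into a curve neighborhood whose unique maximal component is $X(\bar 2 | \bar 3)$; indeed part (4) of Theorem~\ref{main:crvnbd} says $\Gamma_{(d_1\geq 1, d_2\geq 2)}(X(a|b)) = X(\bar 2|\bar 3)$ outright. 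One must check that at least one such composite move produces an \emph{edge} of $\mathcal{G}_n$, i.e.\ that the dimension-drop condition (2b) of the Proposition defining $\mathcal{G}_n$ is satisfied for a suitable choice of $d$; this is a finite bookkeeping check using the length formula $\ell(\bar 2|\bar 3) = 4n-6$ and the explicit lengths of intermediate vertices. Conversely, the Chevalley (Bruhat) edges of type (1) in the Proposition — arrows $u \to v$ whenever $v \lessdot u$ — give a directed path from $(\bar 2|\bar 3)$ down to every vertex, since the Bruhat order on $W^{odd}$ is graded and connected with minimum $(1|2)$. Composing these two facts shows $\mathcal{G}_n$ is strongly connected: any $u \rightsquigarrow (\bar 2|\bar 3) \rightsquigarrow v$.

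Next I would pin down the cycle-length GCD. Using the downward Chevalley edges and the upward curve-neighborhood edges, one can form, for instance, a cycle passing through $(1|2) \to (2|1) \to \cdots$ and back; counting the number of edges (each Chevalley edge drops length by $1$, and a curve-neighborhood edge of degree $d$ raises it by $2d_1 + (2n-1)d_2 - 1$) one gets cycles whose lengths one can compute explicitly. It suffices to produce two cycles whose lengths are coprime — for example one cycle built from a single degree-$(0,1)$ or degree-$(1,1)$ curve-neighborhood edge closed up by Chevalley edges, and another using a degree-$(1,0)$ edge, which will differ in length by an amount making them coprime. Since $r = 1$, in fact \emph{any} cycle of length $\ge 1$ together with the existence of two cycles whose lengths differ by $1$ already forces $\mathrm{GCD} = 1$; for $n = 2$ this is visible directly in the displayed graph, and for general $n$ the same construction works with the length parameter $2n-1$ entering linearly.

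The main obstacle I anticipate is the careful verification of the dimension-drop condition~(2b) for the curve-neighborhood edges: one must confirm that for each relevant $(a|b)$ there exists a degree $d$ such that some maximal component $X(v)$ of $\Gamma_d(X(a|b))$ satisfies $\ell(v) - \ell(a|b) = 2d_1 + (2n-1)d_2 - 1$ exactly, and also handle the ``new'' reducible-curve edge $(1|2) \xrightarrow{(1,1)} (\bar 2|1)$ flagged in Remark~\ref{redcrv}, which is present in $\mathcal{G}_n$ but not the moment graph and whose inclusion is needed for strong connectedness when $n = 2$. Organizing this as a case analysis parallel to Theorem~\ref{main:crvnbd} (cases $a \in \{2,\bar 2\}$ versus not, and the exceptional pairs $(1|2),(2|1)$), and invoking Lemma~\ref{lem2} for the reducible cases, should make it routine; the uniformity in $n$ is guaranteed since all the relevant vertices and length differences involve only the ``small'' indices $1,2,3,\bar 3,\bar 2$ and the parameter $n$ appears only through $\ell(\bar 2|\bar 3) = 4n-6$ and $c_1 = 2\,\mathrm{Div}_1 + (2n-1)\,\mathrm{Div}_2$.
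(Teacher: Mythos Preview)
Your overall architecture matches the paper's: show $\mathcal{G}_n$ is strongly connected by routing through a distinguished vertex, then exhibit two cycles of coprime length. But your execution of the first half contains a genuine gap.

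You propose to reach $(\bar 2|\bar 3)$ from an arbitrary $u$ using only curve-neighborhood (type~(2)) edges, deferring the dimension-drop check as ``finite bookkeeping.'' This actually fails. Take $u = \mathrm{Div}_2$, i.e.\ $u=(\bar 2|3)$ when $n=2$ or $u=(\bar 2|\bar 4)$ when $n\ge 3$. Here $a=\bar 2 > b$, so by Theorem~\ref{main:crvnbd}(1) the degree $(d_1,0)$ neighborhood is $X(u)$ itself and gives no edge; and for any $d$ with $d_2\ge 1$, Theorem~\ref{main:crvnbd}(2)--(4) give $\Gamma_d(X(u))=X(\bar 2|\bar 3)$, but the actual length jump is $1$ while the required jump $2d_1+(2n-1)d_2-1$ is at least $2n-2\ge 2$. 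Hence $u=\mathrm{Div}_2$ has \emph{no} outgoing type~(2) edge at all (this is visible in the $n=2$ figure: $(\bar 2|3)$ has only black downward arrows). Your appeal to Theorem~\ref{main:crvnbd}(4) is similarly illusory: the required jump for $d=(1,2)$ is $4n-1$, which exceeds $\dim\IF$, so that case never produces an edge.

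The paper sidesteps this entirely by inserting one extra leg: first use the downward Bruhat edges (type~(1)), which always exist, to descend from any $u$ to $(1|2)$; then verify just the single upward path $(1|2)\xrightarrow{(1,1)}(\bar 2|1)\xrightarrow{(0,1)}(\bar 2|\bar 3)$ with two explicit length checks; then descend again to any $v$. That is the organizational idea you are missing, and it eliminates the case analysis you flagged as ``the main obstacle.''

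For the GCD, your sketch is correct but vague; the paper simply writes down a length-$2$ cycle $(1|2)\to(2|1)\to(1|2)$ and a length-$(2n-1)$ cycle through the vertices $(1|*)$, which are manifestly coprime.
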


\begin{proof}
We begin the proof by showing that combinatorial quantum Bruhat graph is strongly connected. First, there is a path from $(1|2)$ to $(\bar{2}|\bar{3})$ in the combinatorial quantum Bruhat graph given by \[(1|2) \overset{(1,1)}{\rightarrow} (\bar{2}|1) \overset{(0,1)}{\rightarrow} (\bar{2}|\bar{3}) \] since \[ \Gamma_{(1,1)}(X(1|2))=X(\bar{2}|1), \Gamma_{(0,1)}(X(\bar{2}|1))=X(\bar{2}|\bar{3}),\] \[\ell(\bar{2}|1)-\ell(1|2)=2*1+(2n-1)*1-1, \mbox{ and }\ell(\bar{2}|\bar{3})-\ell(\bar{2}|1)=2*0+(2n-1)*1-1.\] Next, there is clearly a path from $(a|b)$ to $(1|2)$ by decomposing the permutation $(a|b)$ into simple reflections. Finally we claim that there is a path from $(\bar{2}|\bar{3})$ to any other vertex. Indeed, if $(a|b) \in W^{odd} \backslash  \{(\bar{2}|\bar{3})\}$ then there is another point $(c|d) \in W^{odd}$ and edge $(c|d) \rightarrow (a|b)$ such that $\ell(c|d)-\ell(a|b)=1$. Since the $\ell(\bar{2}|\bar{3})$ is maximum in the Bruhat order, we conclude the combinatorial quantum Bruhat graph is strongly connected.

The quantum Bruhat graph $\mathcal{G}_n$ has a cycle of length 2 given by
\[(1|2) \rightarrow (2|1)\rightarrow (1|2).\] There is a cycle of length $2n-1$ given by one of the following two cases. For $n=2$ we have
\[(1|2) \rightarrow (1|\bar{3}) \rightarrow(1|3) \rightarrow (1|2).\]
For $n>2$ we have
\[(1|2) \rightarrow (1|\bar{3}) \rightarrow \cdots  \rightarrow (1|\bar{n}) \rightarrow(1|n) \rightarrow \cdots \rightarrow (1|2).\] The result follows.
\end{proof}

\bibliographystyle{halpha}
\bibliography{bibliography}

\end{document}